\definecolor{bluegreen2}{RGB}{0, 85, 127}
\pgfplotsset{every axis/.append style={
                    axis x line=middle,    
                    axis y line=middle,    
                    axis line style={-,color=blue}, 
                    xlabel={$x$},          
                    ylabel={$y$},          
            }}
\pgfplotsset{compat=1.13}
\theoremstyle{plain}
\newtheorem{thm0}{Theorem}
\newtheorem{thm}{Theorem}[section]
\newtheorem{prop}[thm]{Proposition}
\newtheorem{lemma}[thm]{Lemma}
\newtheorem{cor}[thm]{Corollary}
\theoremstyle{definition}
\newtheorem{dfn}[thm]{Definition}
\newtheorem{ex}[thm]{Example}
\theoremstyle{remark}
\newtheorem{remark}[thm]{Remark}
\DeclareMathOperator{\Sing}{Sing}
\DeclareMathOperator{\Res}{Res}
\DeclareMathOperator{\Eh}{L}
\DeclareMathOperator{\mult}{mult}
\DeclareMathOperator{\ord}{ord}
\DeclareMathOperator{\lct}{lct}
\newcommand{\floor}[1]{\left \lfloor #1 \right \rfloor}
\newcommand{\fpart}[1]{\left \{ #1 \right \}}
\newcommand{\PP}{\mathbb{P}}
\newcommand{\CC}{\mathbb{C}}
\newcommand{\RR}{\mathbb{R}}
\newcommand{\QQ}{\mathbb{Q}}
\newcommand{\ZZ}{\mathbb{Z}}
\newcommand{\NN}{\mathbb{N}}
\newcommand{\cO}{\mathcal{O}}
\newcommand{\cK}{\mathcal{K}}
\newcommand{\cC}{\mathcal{C}}
\def\Diff{\Delta}
\def\cD{T}
\def\w{w}
\title{\bf Counting points with Riemann-Roch formulas}
\author{Jorge Mart\'{\i}n-Morales\thanks{
The author is partially supported
by the European Union NextGenerationEU/PRTR (grant code: RYC2021-034300-I),
by MCIN/AEI/10.13039/501100011033 (grant code: PID2020-114750GB-C31),
by Departamento de Ciencia, Universidad y Sociedad del Conocimiento del Gobierno de Aragón (grant code: E22 20R: ``Algebra y Geometría''),
and by Junta de Andalucía (grant code: FQM-333). \medskip
}}
\affil{\small Department of Mathematics, IUMA, University of Zaragoza \\
Calle Pedro Cerbuna 12, 50009, Zaragoza, Spain \\
URL: \url{http://riemann.unizar.es/\~jorge} \\
email: \href{mailto:jorge.martin@unizar.es}{jorge.martin@unizar.es}}
\date{}
\begin{document}

\maketitle

\begin{abstract}
We  provide an algorithm for computing the number of integral points lying in certain triangles
that do not have integral vertices. We use techniques from Algebraic Geometry such as
the Riemann-Roch formula for weighted projective planes and resolution of singularities.
We analyze the complexity of the method and show that the worst case is given by
the Fibonacci sequence. At the end of the manuscript a concrete example is developed in detail
where the interplay with other invariants of singularity theory is also treated.
\end{abstract}

\renewcommand{\thefootnote}{}
\footnote{2010 \emph{Mathematics Subject Classification.}
14B05,    
32S45,    
14H20,    
14C40,    
11P21. \medskip }   

\footnote{\emph{Key words and phrases.} Counting lattice points, Riemann-Roch theorem, weighted projective space,
quotient singularity, resolution of singularity.}


\section*{Introduction}

This paper deals with the general problem of counting lattice points in a polyhedron
with rational vertices and its connection with both singularity theory of
surfaces and adjunction formulas and Riemann-Roch formulas for curves in the weighted projective plane.
In addition, we focus on rational polyhedra whose vertices are rational points as opposed
to lattice polyhedra whose vertices are integers. Our approach exploits the
connection between Dedekind sums \cite{RG72} and geometry of cyclic quotient singularities,
which has been proposed by several authors \cite{Pom93, Bla95, Bri95, Lat95, Ash15}.
Other important references about the subject are
\cite{Bar07}, \cite{Bar97}, \cite{Ewa96}, \cite{Oda88}, \cite{DHTY04}, \cite{DeL05}.

A polyhedron is a three-dimensional shape in $\RR^3$ with flat polygonal faces, straight edges, and sharp vertices.
Common examples are cubes, prisms, pyramids, etc. For instance, a classical result shows that only five convex regular
polyhedra exist, namely the five Platonic solids, see Figure~\ref{fig:five-platonic-solids}.
However, cones and spheres are not polyhedra since they do not have polygonal faces.
The generalization of polyhedra to higher dimension in~$\RR^n$ are called polytopes.
This way a polygon is simply a two-dimensional polytope.

\begin{figure}[ht]
\centering
\includegraphics[scale=0.41]{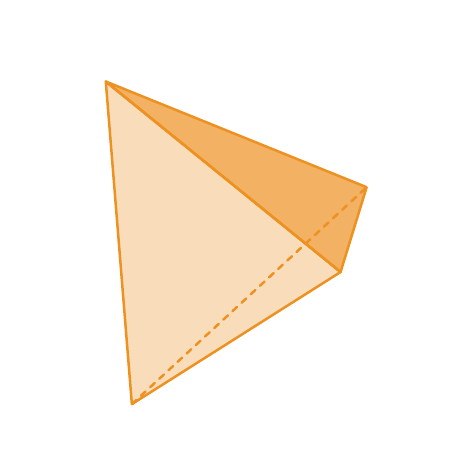} \hspace{-22pt}
\includegraphics[scale=0.41]{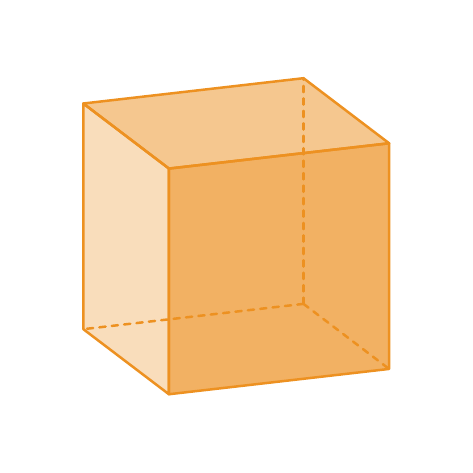} \hspace{-12pt}
\includegraphics[scale=0.63]{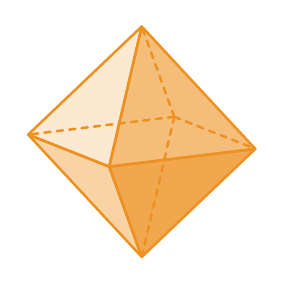}
\includegraphics[scale=0.37]{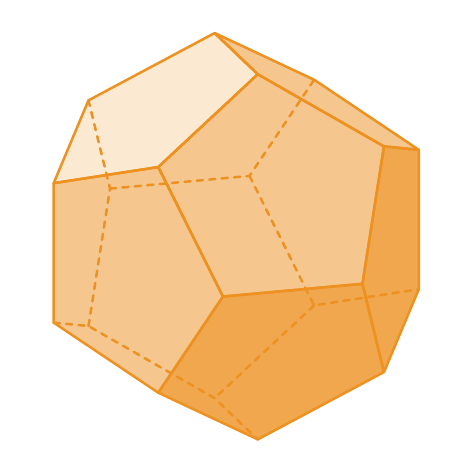} \hspace{1pt}
\includegraphics[scale=0.36]{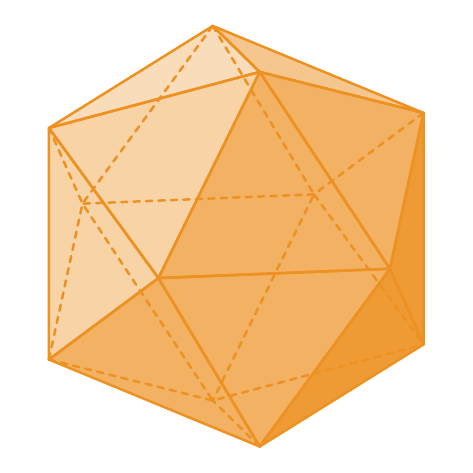}
\caption{The five Platonic solids.}
\label{fig:five-platonic-solids}
\end{figure}

Combinatorics is a branch of mathematics which is concerned with the study
of finite or countable discrete sets. One of the interests in combinatorics is the
counting of certain elements in a given set. The main question addressed in this work
is how to exact count the number of points with integral coordinates inside a convex bounded polytope.
Perhaps the most famous case is the theory of Ehrhart polynomials, introduced
by Eugène Ehrhart \cite{Ehr77}, see also~\cite{CLS12}. These polynomials count the number of lattice
points in the different integral dilations of an integral convex polytope.
We emphasize that whenever we say counting, we mean exact counting.
There is a rich and exciting theory of estimation and approximation,
but that is a very different subject.

A wide variety of topics in Mathematics involve this challenging and hard problem. 
Counting integral points in polyhedra or other questions about them arise in Representation Theory,
Commutative Algebra, Algebraic Geometry, Statistics, and Computer Science.
Applications range from the very pure such as number theory, Hilbert functions, and Kostant's partition
function in representation theory, to the most applied such as cryptography, integer programming, and
contingency tables. Another interesting application is to voting theory which
is concerned with elections and voting systems~\cite{Sch13}.
If we try to count lattice points in more complicated regions of $\RR^4$, then we can find
applications to RSA cryptography~\cite{Sal23}.

The simplest example has successfully been studied by Pick in 1899 \cite{Pick1899}. Pick's Theorem provides
a formula for the area of a simple polygon with integral vertices in terms of the number
within it and on its boundary. There are multiple proofs and they can be generalized to formulas
for some non-simple polygons. More precisely, suppose that a polygon has integral
coordinates for all of its vertices. Let $i$ be the number of integral points interior to the polygon
and let $b$ be the number of integral points on its boundary. Then the area $A$ of this polygon is
\[
A = i + \frac{b}{2} - 1.
\]
Figure~\ref{fig:pick-thm} shows an example of a triangle in $\RR^2$ with vertices $(1,1)$, $(5,2)$, $(3,5)$
where the number of interior points is $i = 6$, the number of points on the boundary is $b = 4$,
and therefore its area is $A = 7$.

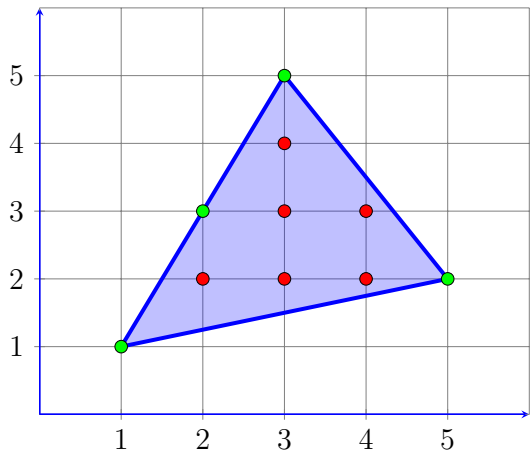
\begin{figure}[ht]
\centering
\begin{tikzpicture}[xscale=0.95, yscale=0.95]
\begin{axis}[domain=0:6, xmin=0, xmax=6, ymin=0, ymax=6, xlabel={}, ylabel={},
axis x line=center, axis y line=center, xtick={1,2,3,4,5}, ytick={1,2,3,4,5}, samples=100]
\draw [help lines,step=1] (0,0) grid (6,6);
\draw [draw=blue, thick] (0,0) -- (5.95,0);
\draw [draw=blue, thick] (0,0) -- (0,5.95);
\draw [white, fill=blue, opacity=0.25] (1,1) -- (5,2) -- (3,5) -- cycle;
\draw [blue, ultra thick] (1,1) -- (5,2) -- (3,5) -- cycle;
\draw [draw=black, fill=green] (1,1) circle (2.5pt);
\draw [draw=black, fill=green] (5,2) circle (2.5pt);
\draw [draw=black, fill=green] (3,5) circle (2.5pt);
\draw [draw=black, fill=green] (2,3) circle (2.5pt);
\draw [draw=black, fill=red] (2,2) circle (2.5pt);
\draw [draw=black, fill=red] (3,2) circle (2.5pt);
\draw [draw=black, fill=red] (4,2) circle (2.5pt);
\draw [draw=black, fill=red] (3,3) circle (2.5pt);
\draw [draw=black, fill=red] (4,3) circle (2.5pt);
\draw [draw=black, fill=red] (3,4) circle (2.5pt);
\end{axis}
\end{tikzpicture}
\caption{Pick's theorem for a triangle.}
\label{fig:pick-thm}
\end{figure}

For a precise description of the main results we present in this work,
some notation needs to be introduced.
In this paper we focus on the study of the point counting problem
and the complexity of an algorithm to find the number of integral points
for polygons of type
\[
\cD_{\w,d} = \{ (x,y,z) \in \RR_{\geq 0}^3 \mid w_0 x + w_1 y + w_2 z = d \},
\]
where $w_0, w_1, w_2, d \in \ZZ_{\geq 0}$.
Assume for a moment $w_0,w_1,w_2$ are pairwise coprime integers and
denote by $w=(w_0,w_1,w_2)$, $|w|=w_0+w_1+w_2$, and $\bar{w}=w_0w_1w_2$. 
The case where the weights are not necessarily pairwise coprime is treated
in \S\ref{sec:non-coprime}.

Consider $\PP^2_w$ the weighted projective plane. For a given Weil divisor $D$ in $\PP^2_w$,
$\cO_{\PP^2_w}(D)$ denotes the sheaf $\cO_{\PP^2_w}(D) = \{ f \in \cK_{\PP^2_w} \mid (f) + D \geq 0 \}$,
being $\cK_{\PP^2_w}$ the sheaf of rational function on $\PP^2_w$.
Finally $K_{\PP^2_w}$ denotes the canonical divisor of $\PP^2_w$.
One of the key ingredients to connect the arithmetical problem referred above with the 
geometry of weighted projective planes comes from the observation that 
\begin{equation*}
\Eh_{\w}(d) := \# (\cD_{\w,d}\cap \ZZ^3) = \chi(\PP^2_w, \cO_{\PP^2_w}(D)) = h^0(\PP^2_\w, \cO_{\PP^2_w}(D)),
\end{equation*}
where $D$ is a Weil divisor in $\PP^2_w$ of degree $d$. In other words $\Eh_{\w}(d)$ coincides
with the dimension of the vector space of weighted homogeneous polynomials of degree~$d$.
This way points $(i,j,k)$ from the lattice $\cD_{\w,d}\cap \ZZ^3$ correspond to monomials
$x^i y^j z^k$ in $\CC[x,y,z]$ of weighted degree $d$.

Let us present the main results of this work. The first main statement revisits the Riemann-Roch
formula and presents a new simplified proof for weighted projective planes.
In particular, it shows an explicit formula for the Ehrhart quasi-polynomial $\Eh_\w(d)$
of degree two of $\cD_{\w,d}$ in terms of~$d$.

\begin{thm0}\label{thm:RR-formula}
Let $D$ be a divisor in $\PP^2_w$ of degree $d$. Then
\[
\chi(\PP^2_w, \cO_{\PP^2_w}(D)) = 1 + \frac{1}{2} D \cdot (D - K_{\PP^2_w}) - \sum_{P\in\Sing(\PP^2_w)}\Diff_P(d+|\w|),
\]
where $\Sing(\PP^2_w)$ denotes the singular locus of the weighted projective plane.
\end{thm0}

The quadratic term $\frac{1}{2} D \cdot (D-K_{\PP^2_w}) = \frac{1}{2}d(d+|w|)$ has to do
with the virtual genus of a curve and $\Diff_P(k)$
is a periodic function of period $\bar \w$ which is an invariant associated with the singularity 
$P\in \Sing(\PP^2_w)$, see~\cite{CMO14, CMO16, CM19}.

The previous combinatorial number $\Diff_P(k)$ has a geometric interpretation and it can be computed via
invariants of curve singularities on a singular surface as follows. Let $(f,P)$ be a reduced curve germ
at a point $P$ in a surface $X$ with a cyclic quotient singularity. Then 
\begin{equation}\label{eq:Delta-invariant}
\Delta_X (k) = \delta_X^{\text{top}}(f) - \kappa_X(f)
\end{equation}
for any reduced germ $f\in \cO_{X,P}(k)$.
Here $\delta_X^{\text{top}}$ is the topological delta invariant
and $\kappa_X$ is the analytic kappa invariant of the singularity.
Note that the choice of a reduced $f \in \cO_X(k)$ does not affect the
result of $\Delta_X(k)$. In Blache's the notation $A_X(k) = \Diff_X(k)$ and $R_X(D) = -\Delta_X(d+|w|)$,
\cite[\S2.1]{Bla95}.
This way the Riemann-Roch formula of Theorem~\ref{thm:RR-formula} can be rewritten as
\begin{equation*}
\chi(\PP^2_w, \cO_{\PP^2_w}(D)) = 1 + \frac{1}{2} D \cdot (D - K_{\PP^2_w}) + R_{\PP^2_w}(D)
\end{equation*}
where $R_X(D)$ is called the correction term, cf.~\cite{Bre77, Bla95}.
As a by-product we obtain a new expression for the correction term,
see~\eqref{eq:R-root-unity},
\[
R_{(w_2;\, w_0,w_1)}(d) := 
- \frac{1}{w_2} \sum_{i=1}^{w_2-1} \frac{1-\zeta_{w_2}^{-id}}{(1-\zeta_{w_2}^{iw_0})(1-\zeta_{w_2}^{iw_1})}.
\]

As an immediate consequence of Theorems~\ref{thm:RR-formula} and~\eqref{eq:Delta-invariant}
one has a method to compute $\Eh_\w(d)$ by means of appropriate curve germs $(\{f=0\},P)$
on surface quotient singularities.
The next results aims to show that the correction term $R_X$ or equivalently the $\Delta_X$-invariant 
can be computed following the Euclidean division algorithm.
\begin{thm0}\label{thm:correction-term}
Consider the division $d = c \cdot q + r$ with $0 \leq r < q$. Then, for $0 \leq k < d$, one has
\[
R_{X(d;1,q)}(k) = -R_{X(q;1,r)}(k) - \fpart{\frac{k}{q}} - \frac{k(k+1+q-d)}{2dq},
\]
where $\fpart{\frac{k}{q}} \in [0,1)$ denotes the decimal part of the fraction.
\end{thm0}

Note that Theorem~\ref{thm:correction-term} provides an effective method to compute
the correction term by repeatedly applying the Euclidean division algorithm as if we were
computing the greatest common divisor of two integers. From a computation point of view
it is known that the worst case of the Euclidean algorithm <is given by the Fibonacci sequence.

This paper is organized as follows. In~\S\ref{sec:counting} we present the main problem and apply the residue
theorem to find a numerical Riemann-Roch formula in \eqref{eq:numerical-RR} and a new expression of the correction
term in terms of certain sums of roots of unity, see~\eqref{eq:R-root-unity} and~\eqref{eq:RR-3terms}.
In~\S\ref{sec:Euler-intersection} we give a geometric interpretation of some terms appearing the Riemann-Roch formula
using the Euler characteristic of a sheaf and intersection theory in the weighted projective plane.
In~\S\ref{sec:DELTA} we prove the main result of this work, namely Theorem \ref{thm:RR-formula}, after
introducing the local $\Delta_P$-invariant of a divisor.
The second main result, Theorem \ref{thm:correction-term}, is presented in~\S\ref{sec:effective-correction-term}
where we study the effective computation of the correction term.
The non-pairwise coprime case is addressed in \S\ref{sec:non-coprime}.
Finally, \S\ref{sec:overview} is devoted to overviewing all the theory with a concrete example
where the interplay with other invariants of singularity theory is also treated.

\vspace{5pt}

\noindent\textbf{Acknowledgments.}
I wrote this work on the occasion of the Distinguished Researcher Award from
the Royal Academy of Exact, Physical, Chemical, and Natural Sciences of Zaragoza.
To be considered for an Academy Award is a great honor and a privilege, so thank you Academy.
It means so much to me and I am truly grateful.
I have been working on this topic with M.~Avendaño and J.I.~Cogolludo.
It is always a pleasure collaborating with them with their fruitful discussions
and ideas. I also thank E.~León-Cardenal for his final proofreading. And last but not least
I am deeply grateful to my family for their unconditional support.


\section{Counting points with the residue theorem}\label{sec:counting}

The main ideas behind this sections were inspired from~\cite{Bec00}, see also \cite{BDR02, BR07}.

Let $w_0, w_1, w_2, d \in \ZZ_{\geq 0}$. Assume $w_0,w_1,w_2$ are pairwise coprime integers and
denote by $w=(w_0,w_1,w_2)$, $|w|=w_0+w_1+w_2$, and $\bar{w}=w_0w_1w_2$.
The case where the weights are not necessarily pairwise coprime is treated in section~\ref{sec:non-coprime}.
Consider the triangle
\[
\cD_{\w,d} = \{ (x,y,z) \in \RR_{\geq 0}^3 \mid w_0 x + w_1 y + w_2 z = d \}.
\]
One is interested in computing the number of integers lying on the triangle, that is,
\begin{equation*}
\Eh_{\w}(d) := \# (\cD_{\w,d}\cap \ZZ^3).
\end{equation*}
A typical strategy to study the behavior of a sequence of cardinalities is to study its associated formal power series $\sum_{d \geq 0} \Eh_{\w}(d) t^d$. This can easily be done with a simple observation. Recall that
\[
\frac{1}{1-t} = 1 + t + t^2 + \cdots = \sum_{i \geq 0} t^i.
\]
Hence
\[
\frac{1}{(1-t^{w_0})(1-t^{w_1})(1-t^{w_2})} = \sum_{i,j,k \geq 0} t^{w_0 i + w_1 j + w_2 k} = \sum_{\ell \geq 0} \Eh_{\w}(\ell) t^\ell.
\]
Then
\[
f(t) := \frac{t^{-d-1}}{(1-t^{w_0})(1-t^{w_1})(1-t^{w_2})} = \Eh_{\w}(0) t^{-d-1} + \Eh_{\w}(1) t^{-d} + \cdots
+ \Eh_{\w}(d) t^{-1} + \cdots
\]
and therefore
\begin{equation}\label{eq:Lwd-integral}
\Eh_{\w}(d) = \Res(f(t),t=0) = \frac{1}{2\pi i} \oint_{\gamma} f(t) dt,
\end{equation}
where $\gamma$ is any circle of radius $\varepsilon$ (small enough) around $t=0$.

Now we can use residue techniques to study the number of points lying on the triangle.
First we modify a bit the function $f(t)$ as follows
\[
f(t) = g(t) + \frac{1}{(1-t^{w_0})(1-t^{w_1})(1-t^{w_2})t},
\qquad g(t) := \frac{t^{-d}-1}{(1-t^{w_0})(1-t^{w_1})(1-t^{w_2})t}.
\]
It is clear that
\begin{equation}\label{eq:Resft0}
\Res(f(t),t=0) = \Res(g(t),t=0) + 1.
\end{equation}
Let us fix $\zeta_{w_\ell}$ three primitive roots of unity of orders $w_\ell$, $\ell = 0,1,2$.
The poles of $g(t)$ are $t=0$ and $t=\zeta_{w_\ell}^i$, $i=0,1,\ldots,w_\ell-1$, $\ell=0,1,2$.
Summing up all the residues including the point at infinity one obtains
\begin{equation}\label{eq:sum-all-res}
\Res(g(t),t=0) + \sum_{i,\ell} \Res(g(t),t=\zeta_{w_\ell}^i) + \Res(g(t),t=\infty) = 0.
\end{equation}
Note that the residue of $g(t)$ at infinity is zero. Equations \eqref{eq:Lwd-integral}, \eqref{eq:Resft0},
\eqref{eq:sum-all-res} provides
\[
\Eh_{\w}(d) = 1 - \sum_{i,\ell} \Res(g(t),t=\zeta_{w_\ell}^i).
\]
The rest of this section is devoted to computing the residues appearing in this formula.
In the discussion we need to separate $t=1$ from the other roots since it has order~$2$ as a pole of~$g(t)$.
Note that the other poles are simple because the weights are pairwise coprime.

\subsection{Residue at \texorpdfstring{$t=1$}{t=1}}

Performing the change of variables $t = e^z$, one passes from a residue at $t=1$ to a residue at $t=0$
as follows
\[
\Res(g(t), t=1) = \Res(e^z g(e^z), t=0)
= \Res \left( \frac{e^{-dz}-1}{(1-e^{w_0z})(1-e^{w_1z})(1-e^{w_2z})}, t=0 \right).
\]
In order to compute this residue, we can use these two series
\[
\begin{aligned}
\frac{1}{1-e^z} &= -\frac{1}{z} + \frac{1}{2} - \frac{z}{12} + \cdots \\[5pt]
e^z &= 1 + z + \frac{1}{2} z^2 + \cdots
\end{aligned}
\]
to obtain the following expression for $e^z g(e^z)$
\[
\left(-\frac{1}{w_0 z} + \frac{1}{2} - \cdots \right)
\left(-\frac{1}{w_1 z} + \frac{1}{2} - \cdots \right)
\left(-\frac{1}{w_2 z} + \frac{1}{2} - \cdots \right)
\left(-dz + \frac{1}{2} d^2 z^2 - \cdots\right).
\]
From here one sees that the coefficient of $z^{-1}$ is
\[
- \frac{1}{2}d^2 \frac{1}{w_0 w_1 w_2} - \frac{1}{2} d \frac{1}{w_1 w_2}
- \frac{1}{2} d \frac{1}{w_0 w_2} - \frac{1}{2} d \frac{1}{w_0 w_1}
\]
and then the residue of $g(t)$ at $t=1$ is
\[
\Res(g(t),t=1) = - \frac{d (d+w_0+w_1+w_2)}{2 w_0 w_1 w_2} = - \frac{d(d+|w|)}{2\bar{w}}.
\]

\subsection{Residue at \texorpdfstring{$t=\zeta^i_{w_\ell} \neq 1$}{t=zeta-i-l-nonzero}}

In this case $t=\zeta_{w_\ell}^i \neq 1$ is a simple pole of $g(t)$ and the limit of
$(t-\zeta_{w_\ell}^i) g(t)$ when $t$ tends to~$\zeta_{w_\ell}^i$ computes the corresponding residue.
To fix the ideas assume for instance that $\ell=2$. Then
\[
\Res(g(t),t=\zeta_{w_2}^i) =
\lim_{t \to \zeta_{w_2}^i} \frac{(t-\zeta_{w_2}^i) (t^{-d}-1)}{(1-t^{w_0})(1-t^{w_1})(1-t^{w_2})t} =
\frac{1}{w_2} \cdot \frac{1-\zeta_{w_2}^{-id}}{(1-\zeta_{w_2}^{iw_0})(1-\zeta_{w_2}^{iw_1})}.
\]
Analogously one obtains the residues at $t = \zeta_{w_\ell}^i$ for $\ell = 0,1$.

\subsection{Summary}

For better presentation of the formula we have obtained so far for $\Eh_{\w}(d)$, we need to introduce some notation,
we set
\begin{equation}\label{eq:R-root-unity}
R_{(w_2;\, w_0,w_1)}(d) := 
- \frac{1}{w_2} \sum_{i=1}^{w_2-1} \frac{1-\zeta_{w_2}^{-id}}{(1-\zeta_{w_2}^{iw_0})(1-\zeta_{w_2}^{iw_1})}
\end{equation}
and analogously one obtains formulas for $R_{(w_0;\, w_1,w_2)}(d)$ and $R_{(w_1;\, w_0,w_2)}(d)$. Also, the sum of the three terms
is denoted by
\begin{equation}\label{eq:RR-3terms}
R_w(d) := R_{(w_0;\, w_1,w_2)}(d) + R_{(w_1;\, w_0,w_2)}(d) + R_{(w_2;\, w_0,w_1)}(d)
\end{equation}
so that one has the compact formula
\begin{equation}\label{eq:numerical-RR}
\Eh_{\w}(d) = 1 + \frac{d(d+|w|)}{2\bar{w}} + R_{w}(d).
\end{equation}
A similar formula has been obtained in~\cite{BR07}.

We will see later that the residue of $g(z)$ at $t=1$, i.e~$\frac{d(d+|w|)}{2\bar{w}}$, can be understood as the intersection
number of two divisors in the weighted projective plane $\PP_w^2$, while the residue at $t = \zeta_{w_\ell}^i \neq 1$, i.e.~$R_w(d)$
has to do with an extra term that appears in the Riemann-Roch formula as a consequence of the fact that $\PP^2_w$
has three isolated singular points.


\section{Euler characteristic of a sheaf and intersection theory in \texorpdfstring{$\PP^2_w$}{P2w}}\label{sec:Euler-intersection}

The main goal of this section is to give a geometrical interpretation of some terms appearing
in~\eqref{eq:numerical-RR}. We will show that $\Eh_w(d)$ is the Euler characteristic
of a sheaf and the term $\frac{d(d+|w|)}{2\bar{w}}$ can be seen as the intersection number of two divisors
in the weighted projective plane. Throughout the discussion the virtual genus of a curve in $\PP^2_w$ will appear.
The study of the correction term $R_w(d)$ is postponed until section~\ref{sec:DELTA}.

We split this section in three different parts.

\subsection{The weighted projective plane}\label{sec:weight-proj-plane}

For a more detailed presentation we refer to~\cite{AMO14a,AMO14b}, cf.~\cite{Dol82}.

Let $w = (w_0,w_1,w_2) \in \ZZ^3_{\geq 1}$ be a weight vector. The weighted projective plane associated with $w$
is defined by
\[
\PP^2_w := \frac{\CC^3 \setminus \{0\}}{\sim},
\]
where $(x',y',z') \sim (x,y,z)$, if there exists $t \in \CC^{*}$ such that $x' = t^{w_0} x$, $y' = t^{w_0} y$,
and $z' = t^{w_0} z$. The class of $(x,y,z)$ is denoted by $[x:y:z]_{w}$ and we drop the subindex $w$ if no confusion
arises.

The weighted projective plane is an orbifold that can be covered by three charts $\PP^2_w = U \cup V \cup W$ where
$U = \{x \neq 0 \}$, $V = \{y\neq 0 \}$, and $W = \{z \neq 0\}$. The first chart is given by
\[
X(w_2;w_0,w_1) \longrightarrow W, \ [(x,y)] \mapsto [x:y:1]_w,
\]
where $X(w_2;w_0,w_1) = \CC^2/C_{w_2}$, $C_{w_2}$ denotes the cyclic group of the $w_2$-roots of unity in~$\CC^{*}$,
and the action is defined by $\xi \cdot (x,y) = (\xi^{w_0} x, \xi^{w_1}y)$.

Denote by $w_{ij} = \gcd(w_i,w_j)$ for $i,j = 0,1,2$, $i \neq j$ and put $v_i = \frac{w_i}{w_{ij}w_{ik}}$
for $\{i,j,k\} = \{0,1,2\}$. The following map
\[
\phi: \PP^2_w \longrightarrow \PP^2_v, \ [x:y:z]_w \mapsto [x^{w_{12}}:y^{w_{02}}:z^{w_{01}}]_v
\]
is an isomorphism of algebraic varieties and the weights $v_i$'s are pairwise coprime. From now on we will always assume
this condition on the weights $w_i$'s unless explicitly stated otherwise, see \S\ref{sec:non-coprime}.

\subsection{Intersection theory in \texorpdfstring{$\PP_w^2$}{Pw2}}

Again we cite \cite{AMO14a, AMO14b} for a more detailed exposition, see also~\cite{Ful98}.

In the context of intersection theory \cite{Mum61} there is a version of Bézout's theorem in the weighted projective plane.
Let $D_1$ and $D_2$ be two Weil divisors in $\PP^2_w$, then the intersection multiplicity $D_1 \cdot D_2$ is well defined
and it verifies
\[
D_1 \cdot D_2 = \frac{1}{w_0 w_1 w_2} \deg_w (D_1) \deg_w (D_2) = \frac{1}{\bar{w}} \deg_w (D_1) \deg_w (D_2),
\]
where $\deg_w (D_i)$ is the degree of $D_i$. Note that if $D_i$ is given by a quasihomogeneous polynomial $H_i$,
then $\deg_w (D_i)$ is simply the degree of $H_i$ as a quasihomogeneous polynomial, or equivalently, the degree
of $H_i(x^{w_0},y^{w_1},z^{w_2})$.

The canonical divisor $K_{\PP^2_w}$ is the class of minus the sum of the three axes. Then it has degree $-w_0-w_1-w_2 = -|w|$.
Hence
\begin{equation}\label{eq:12DDK}
\frac{1}{2} D \cdot (D - K_{\PP^2_w}) = \frac{d(d+|w|)}{2\bar{w}}.
\end{equation}
This is the second term on the right-hand side of equation~\eqref{eq:numerical-RR}.

Shifting by the canonical divisor one gets $g_w(d)$ the \emph{virtual genus} of a curve of degree $d$ in $\PP^2_w$,
that is,
\begin{equation}\label{eq:virtual-genus}
g_w(d) := \chi(\PP^2_w,\cO_{\PP^2_w}) + \frac{1}{2} D \cdot (D+K_{\PP^2_w}) = 1 + \frac{d(d-|w|)}{2\bar{w}}.
\end{equation}

\subsection{Euler characteristic of \texorpdfstring{$\cO_{\PP_w^2}(D)$}{OPw2D}}

Each solution $(i,j,k) \in \cD_{\w,d}\cap \ZZ^3$ gives rise to a monomial $x^i y^j z^k$ of weighted degree~$d$.
This way one finds a basis as a $\CC$-vector space of $\CC[x,y,z]_{w,d}$, the quasihomogeneous polynomials with respect
to $w$ of degree $d$. It turns out that this vector space is isomorphic to the cohomology $H^0(\PP^2_w, \cO_{\PP^2_w}(D))$
where $D$ is any Weil divisor in the weighted projective plane of degree $d$ and
\[
\cO_{\PP^2_w}(D) = \{ f \in \cK_{\PP^2_w} \mid (f) + D \geq 0 \}
\]
being $\cK_{\PP^2_w}$ the sheaf of rational function on $\PP^2_w$. If $D = \{ H = 0 \} \geq 0$ is an effective divisor
of degree $d$, then the isomorphism is given by
\[
\begin{tikzcd}[row sep=-9pt,/tikz/column 1/.append style={anchor=base east},/tikz/column 2/.append style={anchor=base west}]
{\CC[x,y,z]_{w,d}}\rar&H^0(\PP^2_w, \cO_{\PP^2_w}(D))\\
F\rar[mapsto]&\dfrac{F}{H}.
\end{tikzcd}
\]
By Serre's duality $H^2(\PP^2_w, \cO_{\PP^2_w}(D))$ is isomorphic to $H^0(\PP^2_w, \cO_{\PP^2_w}(K_{\PP^2_w}-D))$
where $K_{\PP^2_w}$ is the canonical divisor of $\PP^2_w$. Since $K_{\PP^2_w}-D$ has negative degree,
namely $-|w|-d = -(d+w_0+w_1+w_2)$, these cohomology groups vanish. On the other hand,
$H^1(\PP^2_w, \cO_{\PP^2_w}(D)) = 0$ always holds~\cite[\S 1.4]{Dol82}. Then the Euler characteristic $\chi(\PP^2_w, \cO_{\PP^2_w}(D))$
is concentrated in degree zero and one has
\begin{equation}\label{eq:euler-char-OPD}
\chi(\PP^2_w, \cO_{\PP^2_w}(D)) := \sum_{i=0}^{2} \dim H^i(\PP^2_w, \cO_{\PP^2_w}(D))
= \dim H^0(\PP^2_w, \cO_{\PP^2_w}(D)) = \Eh_{\w}(d).
\end{equation}
This way we have just given a geometrical interpretation to the left-hand side of equation~\eqref{eq:numerical-RR}.
Note that $\chi(\PP^2_w,\cO_{\PP^2_w}) = \Eh_{\w}(0) = 1$. This corresponds to the first term on right-hand side
of equation~\eqref{eq:numerical-RR}.


\section{The \texorpdfstring{$\Delta_P$}{DP}-invariant of a divisor}\label{sec:DELTA}

The purpose of this section is to study the correction term $R_w(d)$ from~\eqref{eq:RR-3terms} and~\eqref{eq:numerical-RR}.
We will show that each $R_{(w_i;w_j,w_k)}(d)$, where $\{i,j,k\} = \{0,1,2\}$, is a local invariant of a divisor in a
cyclic quotient singularity. This together with the results from \S\ref{sec:Euler-intersection} will lead us
to a new proof of Theorem~\ref{thm:RR-formula} that was already established in \cite{Bla95}, cf.~\cite{CM19}.

We start by defining a local invariant associated with a cyclic quotient singularity,
namely the $\Delta_P$-invariant.
Given $p,q,r\in \ZZ_{\geq 1}$ we define the following number which generalizes the combinatorial number $\binom{d}{2}$:
\begin{equation}\label{eq-delta-comb}
\delta^{(p,q)}_{r}:= \frac{r (qr-p-q+1)}{2p}.
\end{equation}
Note that $\binom{d}{2} = \delta^{(1,1)}_d$.
Consider also the following cardinality
\[
A^{(p,q)}_r:= \#\{ (i,j)\in \ZZ^2_{\geq 1} \mid  pi+qj\leq qr \}.
\]

\begin{dfn}\label{defDELTA}
Let $p,q \in Z_{\geq 0}$ be two coprime integers. Consider the action $C_p \times \CC^2 \to \CC^2$
given by $\xi \cdot (x,y) = (\xi^{-1}x,\xi^{q}y)$ where $C_p = \{ \xi \in \CC^{*} \mid \xi^{p}=1 \}$.
This quotient space is denoted by $X(p;-1,q)=X$. Let $k\geq 0$ and $P\in X(p;-1,q)$.
The \emph{$\Diff_P$-invariant} of $X$ is defined as follows
\[
\Diff_{(p;-1,q)}(k):= A^{(p,q)}_r-\delta^{(p,q)}_r,
\]
where $r=q^{-1}k \mod p$.
\end{dfn}

\begin{remark}\label{remark:DELTA}
Assume $w = (w_0,w_1,w_2)$ and the weights $w_i$'s are pairwise coprime.
In order to compute the $\Delta_P$-invariant for a general cyclic quotient space
one uses the following relation
$\Delta_{(w_2;w_0,w_1)}(d) = \Delta_{(w_2;-1,-w_0^{-1}w_1 \mod w_2)}(-w_0^{-1}d \mod w_2)$.
\end{remark}

For cyclic quotient singularities the $\Delta_X$-invariant has an intrinsic geometric meaning,
see~\eqref{eq:DeltaXk}.
We will show that the $\Delta_P$-invariant is related to the correction term $R_w(d)$,
see Proposition~\ref{prop:Delta-R}. Before that we need to prove two technical results.

\begin{lemma}\label{lemma:LgR12}
Let $p,q,r \in \ZZ^3_{\geq 1}$. Then one has
\begin{enumerate}[label=\rm(\arabic{enumi})]
\item $\Eh_{(p,q,1)}(qr-p-q) = A^{(p,q)}_r$,
\item $g_{(p,q,1)}(qr+1) = \delta^{(p,q)}_r - \frac{p+q}{2pq} + 1$.
\end{enumerate}
\end{lemma}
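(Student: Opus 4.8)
The plan is to prove both identities by direct computation, using the explicit formulas already available in the excerpt. For part (1), I would unwind the definitions: by the correspondence between lattice points in $\cD_{(p,q,1),m}$ and monomials of weighted degree $m$, the number $\Eh_{(p,q,1)}(m)$ counts the triples $(i,j,k)\in\ZZ^3_{\geq 0}$ with $pi+qj+k=m$. Since the weight on the last variable is $1$, for each admissible pair $(i,j)$ with $pi+qj\le m$ there is exactly one choice of $k=m-pi-qj\ge 0$, so $\Eh_{(p,q,1)}(m)=\#\{(i,j)\in\ZZ^2_{\geq 0}\mid pi+qj\le m\}$. Setting $m=qr-p-q$, I would then shift indices $i\mapsto i+1$, $j\mapsto j+1$ to pass from $\ZZ^2_{\geq 0}$ to $\ZZ^2_{\geq 1}$: the condition $p(i+1)+q(j+1)\le qr-p-q$ becomes $pi+qj\le qr-2p-2q$, which is \emph{not} quite $pi+qj\le qr$, so I would need to be careful about the exact shift. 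The cleaner route is probably to observe directly that $\{(i,j)\in\ZZ^2_{\geq 1}\mid pi+qj\le qr\}$ is in bijection with $\{(i',j')\in\ZZ^2_{\geq 0}\mid pi'+qj'\le qr-p-q\}$ via $i'=i-1$, $j'=j-1$, which gives exactly $A^{(p,q)}_r=\Eh_{(p,q,1)}(qr-p-q)$. The only subtlety is checking that $qr-p-q\ge 0$ is not needed for the bijection to hold (if $qr-p-q<0$ both sets are empty), so the identity is valid in all cases.

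For part (2), I would start from the closed formula \eqref{eq:virtual-genus}, namely $g_w(m)=1+\frac{m(m-|w|)}{2\bar w}$, specialized to $w=(p,q,1)$, so $|w|=p+q+1$ and $\bar w=pq$. Plugging in $m=qr+1$ gives
\[
g_{(p,q,1)}(qr+1)=1+\frac{(qr+1)(qr+1-p-q-1)}{2pq}=1+\frac{(qr+1)(qr-p-q)}{2pq}.
\]
On the other side, from \eqref{eq-delta-comb}, $\delta^{(p,q)}_r=\frac{r(qr-p-q+1)}{2p}$, so $\delta^{(p,q)}_r-\frac{p+q}{2pq}+1=1+\frac{qr(qr-p-q+1)-(p+q)}{2pq}$. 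The remaining task is the purely algebraic verification that the two numerators agree: expanding $(qr+1)(qr-p-q)=q^2r^2-pqr-q^2r+qr-p-q$ and $qr(qr-p-q+1)-(p+q)=q^2r^2-pqr-q^2r+qr-p-q$, which match. So part (2) reduces to this elementary expansion.

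I do not anticipate a genuine obstacle here; this is a bookkeeping lemma. The one place to be attentive is part (1), where the off-by-one shift between the $\ZZ_{\geq 0}$-indexing natural for $\Eh$ and the $\ZZ_{\geq 1}$-indexing used in the definition of $A^{(p,q)}_r$ must be done correctly, and where one should confirm the identity remains valid (trivially, both sides zero) in the degenerate range $qr<p+q$. Everything else is substitution into \eqref{eq:virtual-genus} and \eqref{eq-delta-comb} followed by a one-line polynomial identity.
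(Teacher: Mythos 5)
Your proposal is correct and follows essentially the same route as the paper: part (1) is the same unit shift $(i,j)\mapsto(i+1,j+1)$ identifying the count of $(i,j)\in\ZZ^2_{\geq 0}$ with $pi+qj\leq qr-p-q$ with $A^{(p,q)}_r$, and part (2) is the same substitution of $w=(p,q,1)$, $d=qr+1$ into \eqref{eq:virtual-genus} followed by an elementary algebraic rearrangement matching \eqref{eq-delta-comb}. Your extra remark that both sides of (1) vanish when $qr<p+q$ is a harmless clarification the paper leaves implicit.
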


\begin{proof}
These two formulas easily hold from the definitions of $\Eh_{\w}(d)$, $g_{\w}(d)$, $A^{(p,q)}_r$, and $\delta^{(p,q)}_r$
as follows
\[
\begin{aligned}
\Eh_{(p,q,1)}(qr-p-q) &= \# \{ (i,j,k) \in \ZZ_{\geq 0}^3 \mid ip + jq + k = qr-p-q \} \\
&= \# \{ (i,j) \in \ZZ_{\geq 0}^2 \mid ip + jq \leq qr-p-q \} \\
&= \# \{ (i,j) \in \ZZ_{\geq 0}^2 \mid (i+1)p + (j+1)q \leq qr \} = A^{(p,q)}_r, \\[10pt]
g_{(p,q,1)}(qr+1) &= 1 + \frac{(qr+1)(qr-p-q)}{2pq} = 1 + \frac{qr(qr-p-q)}{2pq} + \frac{qr-p-q}{2pq} \\
&= 1 + \frac{qr(qr-p-q+1)}{2pq} + \frac{\cancel{qr}-p-q}{2pq} - \cancel{\frac{qr}{2pq}} = \delta^{(p,q)}_r - \frac{p+q}{2pq} + 1,
\end{aligned}
\]
as it was claimed in the statement.
\end{proof}

\begin{lemma}\label{lemma:LgR}
Let $w = (w_0,w_1,w_2)$ be a weight vector with $w_i$'s pairwise coprime.
Consider $p = w_2$, $q = (- w_0^{-1} w_1 \mod w_2)$, and $r = (w_1^{-1} d \mod w_2) = (-(qw_0)^{-1}d \mod w_2)$.
Then
\[
R_{(p,q,1)}(qr-p-q) = - R_{(w_2;w_0,w_1)}(d-|w|) + \frac{p+q}{2pq} - 1.
\]
\end{lemma}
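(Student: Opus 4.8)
The plan is to relate the "correction term" $R_{(p,q,1)}(qr-p-q)$ attached to the weight vector $(p,q,1)$ to the single-point correction term $R_{(w_2;w_0,w_1)}(d-|w|)$ by tracking exactly which roots of unity survive in the three sums defining $R_{(p,q,1)}$. Since $p=w_2$, $q=(-w_0^{-1}w_1 \bmod w_2)$ and the third weight is $1$, the formula \eqref{eq:RR-3terms} gives
\[
R_{(p,q,1)}(m) = R_{(p;\,q,1)}(m) + R_{(q;\,p,1)}(m) + R_{(1;\,p,q)}(m),
\]
and the last term vanishes because the sum in \eqref{eq:R-root-unity} for modulus $1$ is empty. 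So the first step is to discard $R_{(1;p,q)}$ and then evaluate the remaining two sums at $m = qr-p-q$.

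Next I would handle $R_{(q;\,p,1)}(qr-p-q)$: here $\zeta_q$ runs over nontrivial $q$-th roots of unity and the relevant exponents are $iw_0$-analogues, i.e. $i p$ and $i\cdot 1 = i$. The exponent $-i m = -i(qr-p-q) \equiv -i(-p) \equiv ip \pmod q$ since $qr \equiv 0$; so the numerator $1-\zeta_q^{-im}$ becomes $1-\zeta_q^{ip}$, which is exactly one of the factors in the denominator, and the sum telescopes to something elementary — a sum of $\frac{1}{1-\zeta_q^i}$ over nontrivial roots, which is a classical Dedekind-type quantity equal to $\frac{q-1}{2q}$ up to sign. I expect this term to contribute the rational correction $\frac{p+q}{2pq}-1$ together with part of a boundary adjustment; more precisely I would match it against the known value of $g_{(p,q,1)}$ versus $\Eh_{(p,q,1)}$ via Lemma~\ref{lemma:LgR12} and the numerical Riemann–Roch \eqref{eq:numerical-RR}. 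Indeed, combining \eqref{eq:numerical-RR} for $\w = (p,q,1)$ at $d=qr-p-q$ with Lemma~\ref{lemma:LgR12}(1) reduces the whole identity to an equality between $R_{(p;q,1)}(qr-p-q)$ and $-R_{(w_2;w_0,w_1)}(d-|w|)$ modulo the explicitly computed contributions of the $t=1$ residue and the $R_{(q;p,1)}$ and $R_{(1;p,q)}$ pieces.

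The heart of the matter is then the first term $R_{(p;\,q,1)}(qr-p-q)$, which I would show equals $-R_{(w_2;w_0,w_1)}(d-|w|)$ exactly. In $R_{(p;q,1)}$ the modulus is $p=w_2$, the denominator factors are $1-\zeta_p^{iq}$ and $1-\zeta_p^{i\cdot 1}=1-\zeta_p^i$, and the numerator exponent is $-i(qr-p-q) \equiv -iqr + iq \equiv iq(1-r)\pmod p$. Using $r \equiv w_1^{-1}d \pmod p$ one can rewrite $q(1-r)$ and, via the identity $q \equiv -w_0^{-1}w_1 \pmod p$, convert the summand into precisely the summand of $R_{(w_2;w_0,w_1)}(d-|w|)$ after the substitution $i \mapsto$ (a unit multiple of $i$) that reindexes the sum over nontrivial $p$-th roots of unity — this is exactly the kind of reindexing already used in Remark~\ref{remark:DELTA}. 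The sign flip comes from the overall $-\frac{1}{p}$ in \eqref{eq:R-root-unity} interacting with the reindexing $\zeta_p^i \mapsto \zeta_p^{-i}$ or similar. The main obstacle I anticipate is bookkeeping the modular arithmetic cleanly: one must verify that the substitution sending the index set $\{1,\dots,p-1\}$ to itself matches numerator and both denominator factors simultaneously, and that the "$d-|w|$" shift (rather than $d$) is exactly what makes the $+iq$ term in the exponent align. I would organize this as: (i) expand both sides via \eqref{eq:R-root-unity}; (ii) in $R_{(p;q,1)}$ substitute $i \mapsto i'$ where $i' \equiv -(w_0 q^{-1}) i$ or the appropriate unit so that $\zeta_p^{iq}\mapsto \zeta_p^{i'w_0}$ and $\zeta_p^i \mapsto \zeta_p^{i'w_1}$ (using $qw_0 \equiv -w_1$); (iii) check the numerator exponent transforms to $-i'(d-|w|)$ modulo $p$; (iv) conclude equality of the sums up to the sign produced by the substitution, and fold the $R_{(q;p,1)}$ contribution and the constant $\frac{p+q}{2pq}-1$ together using Lemma~\ref{lemma:LgR12}.
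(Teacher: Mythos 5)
Your overall skeleton (split $R_{(p,q,1)}$ into the three local terms via \eqref{eq:RR-3terms}, kill $R_{(1;p,q)}$, and evaluate $R_{(q;p,1)}(qr-p-q)=R_{(q;p,1)}(-p)=-\frac{q-1}{2q}$ by the cancellation of $1-\zeta_q^{ip}$) is exactly the paper's, but the step you call ``the heart of the matter'' is wrong: the exact equality $R_{(p;q,1)}(qr-p-q) = -R_{(w_2;w_0,w_1)}(d-|w|)$ does not hold. Since $\frac{p+q}{2pq}-1 = -\frac{p-1}{2p}-\frac{q-1}{2q}$ and the second term only supplies $-\frac{q-1}{2q}$, the statement of the lemma forces $R_{(p;q,1)}(qr-p-q) = -R_{(w_2;w_0,w_1)}(d-|w|) - \frac{p-1}{2p}$; what is actually true (and what the paper proves) is the additive identity $R_{(p;q,1)}(qr-p-q) + R_{(w_2;w_0,w_1)}(d-|w|) = -\frac{w_2-1}{2w_2}$. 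A concrete counterexample to your claim: $w=(1,2,5)$, $d=1$, so $p=5$, $q=3$, $r=3$; then $R_{(5;3,1)}(1)=0$ while $-R_{(5;1,2)}(d-|w|)=-R_{(5;1,2)}(-7)=\frac{2}{5}$, and the difference is precisely $-\frac{p-1}{2p}=-\frac{2}{5}$.

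The reason your reindexing cannot be repaired as stated is that you ask for a single unit substitution $i\mapsto i'$ with $\zeta_p^{iq}\mapsto\zeta_p^{i'w_0}$ \emph{and} $\zeta_p^{i}\mapsto\zeta_p^{i'w_1}$ simultaneously; this needs $c$ with $cq\equiv w_0$ and $c\equiv w_1 \pmod{w_2}$, i.e. $w_1q\equiv w_0$, i.e. $w_0^2+w_1^2\equiv 0 \pmod{w_2}$, which fails in general. The available substitution is $\zeta_{w_2}\mapsto\zeta_{w_2}^{-w_0}$ (as in the paper), which sends the denominator to $(1-\zeta^{iw_1})(1-\zeta^{-iw_0})$ with the ``wrong'' sign in one exponent; after multiplying numerator and denominator by $\zeta^{iw_0}$ and \emph{adding} $R_{(w_2;w_0,w_1)}(d-|w|)$, the numerators combine to $1-\zeta^{iw_0}$, one factor cancels, and the sum collapses to $-\frac{1}{w_2}\sum_{i=1}^{w_2-1}\frac{1}{1-\zeta^{iw_1}}=-\frac{w_2-1}{2w_2}$, which is exactly the constant your accounting is missing. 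Also note that your fallback route via Lemma~\ref{lemma:LgR12} and \eqref{eq:numerical-RR} only reduces the lemma to the identity $\Delta_{(p;-1,q)}(qr)=-R_{(w_2;w_0,w_1)}(d-|w|)$, which is Proposition~\ref{prop:Delta-R} — proved in the paper \emph{from} this lemma — so that detour is circular unless you supply an independent proof of that relation.
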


\begin{proof}
By definition the correction term splits in three terms
\begin{equation}\label{eq:R3terms}
R_{(p,q,1)}(qr-p-q) = R_{(p;q,1)}(qr-p-q) + R_{(q;p,1)}(qr-p-q) + R_{(1;p,q)}(qr-p-q).
\end{equation}
We plan to calcute these three terms separately.
The third term in~\eqref{eq:R3terms} is clearly zero.

For the second one, let us consider $\zeta_q$ a primitive $q$th root of unity. Then
\begin{equation}\label{eq:Rqp1}
R_{(q;p,1)}(qr-p-q) = R_{(q;p,1)}(-p) =
- \frac{1}{q} \sum_{i=1}^{q-1} \frac{\cancel{\phantom{(}1-\zeta_q^{ip}\phantom{)}}}{\cancel{(1-\zeta_q^{ip})}(1-\zeta_q^i)} =
- \frac{q-1}{2q}.
\end{equation}
For the last calculation in~\eqref{eq:Rqp1} see \S\ref{sec:formulaq} in the appendix.

For the first term in~\eqref{eq:R3terms}, let us fix $\zeta_{w_2}$ a primitive $p$th root of unity. By definition
\[
R_{(p;q,1)} (qr-p-q) = R_{(p;q,q)} (qr-q) = -\frac{1}{p} \sum_{i=1}^{p-1} \frac{1-\zeta_{w_2}^{-i(qr-q)}}{(1-\zeta_{w_2}^{iq})(1-\zeta_{w_2}^i)}.
\]
Since $w_2$ and $w_0$ are coprime we can substitute $\zeta_{w_2}$ by $\zeta_{w_2}^{-w_0}$ and the result remains
\[
-\frac{1}{w_2} \sum_{i=1}^{w_2-1} \frac{1-\zeta_{w_2}^{iqrw_0-iqw_0}}{(1-\zeta_{w_2}^{-iw_0q})(1-\zeta_{w_2}^{-iw_0})} =
-\frac{1}{w_2} \sum_{i=1}^{w_2-1} \frac{1-\zeta_{w_2}^{-id+iw_1}}{(1-\zeta_{w_2}^{iw_1})(1-\zeta_{w_2}^{-iw_0})}.
\]
Multiplying numerator and denominator by $\zeta_{w_2}^{iw_0}$ one obtains
\begin{equation}\label{eq:Rpq1}
R_{(p;q,1)} (qr-p-q) =
-\frac{1}{w_2} \sum_{i=1}^{w_2-1} \frac{\zeta_{w_2}^{-id+iw_0+iw_1}-\zeta_{w_2}^{iw_0}}{(1-\zeta_{w_2}^{iw_1})(1-\zeta_{w_2}^{iw_0})}.
\end{equation}
On the other hand
\begin{equation}\label{eq:Rw2w0w1}
R_{(w_2;w_0,w_1)} (d-|w|) =
-\frac{1}{w_2} \sum_{i=1}^{w_2-1} \frac{1-\zeta_{w_2}^{-i(d-w_0-w_1)}}{(1-\zeta_{w_2}^{iw_0})(1-\zeta_{w_2}^{iw_1})}.
\end{equation}
Equations~\eqref{eq:Rpq1} and~\eqref{eq:Rw2w0w1} provide
\begin{equation*}
R_{(p;q,1)} (qr-p-q) + R_{(w_2;w_0,w_1)} (d-|w|) =
-\frac{1}{w_2} \sum_{i=1}^{w_2-1} \frac{\cancel{1-\zeta_{w_2}^{iw_0}}}{\cancel{(1-\zeta_{w_2}^{iw_0})}(1-\zeta_{w_2}^{iw_1})} = 
- \frac{w_2-1}{2w_2},
\end{equation*}
see \S\ref{sec:formulaq} in the appendix.

Finally observe that summing up all the contributions to $R_{(p,q,1)}(qr-p-q)$ from~\eqref{eq:R3terms} yields the desired formula.
\end{proof}

\begin{prop}\label{prop:Delta-R}
$\Delta_{(w_2;w_0,w_1)}(d) = - R_{(w_2;w_0,w_1)}(d-|w|)$.
\end{prop}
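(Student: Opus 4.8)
The plan is to reduce $\Delta_{(w_2;w_0,w_1)}(d)$ to the normalized form of Definition~\ref{defDELTA}, then convert the combinatorial quantities $A^{(p,q)}_r$ and $\delta^{(p,q)}_r$ into $\Eh$ and $g_w$ via Lemma~\ref{lemma:LgR12}, apply the numerical Riemann--Roch formula~\eqref{eq:numerical-RR} together with~\eqref{eq:virtual-genus} to express the difference as a correction term $R_{(p,q,1)}$, and finally invoke Lemma~\ref{lemma:LgR}; all the non-$R$ contributions then cancel.

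Concretely, set $p = w_2$, $q = (-w_0^{-1}w_1 \bmod w_2)$, and $k = (-w_0^{-1}d \bmod w_2)$. By Remark~\ref{remark:DELTA} we have $\Delta_{(w_2;w_0,w_1)}(d) = \Delta_{(p;-1,q)}(k)$, and by Definition~\ref{defDELTA} the latter equals $A^{(p,q)}_r - \delta^{(p,q)}_r$ with $r = (q^{-1}k \bmod p)$. Since $q^{-1} \equiv -w_0 w_1^{-1} \pmod{w_2}$, a one-line computation gives $r \equiv w_1^{-1}d \pmod{w_2}$, so this $r$ is precisely the one appearing in Lemmas~\ref{lemma:LgR12} and~\ref{lemma:LgR}.

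Next, rewrite $A^{(p,q)}_r = \Eh_{(p,q,1)}(qr-p-q)$ and $\delta^{(p,q)}_r = g_{(p,q,1)}(qr+1) + \frac{p+q}{2pq} - 1$ using Lemma~\ref{lemma:LgR12}, so that
\[
\Delta_{(w_2;w_0,w_1)}(d) = \Eh_{(p,q,1)}(qr-p-q) - g_{(p,q,1)}(qr+1) - \frac{p+q}{2pq} + 1.
\]
Applying~\eqref{eq:numerical-RR} and~\eqref{eq:virtual-genus} to the weight vector $(p,q,1)$, for which $\bar w = pq$ and $|w| = p+q+1$, one checks that the polynomial parts agree: both equal $1 + \frac{(qr-p-q)(qr+1)}{2pq}$, because $(qr-p-q) + (p+q+1) = qr+1$ and $(qr+1) - (p+q+1) = qr-p-q$. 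Hence $\Eh_{(p,q,1)}(qr-p-q) - g_{(p,q,1)}(qr+1) = R_{(p,q,1)}(qr-p-q)$, and we obtain $\Delta_{(w_2;w_0,w_1)}(d) = R_{(p,q,1)}(qr-p-q) - \frac{p+q}{2pq} + 1$.

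To finish, substitute the identity of Lemma~\ref{lemma:LgR}, namely $R_{(p,q,1)}(qr-p-q) = -R_{(w_2;w_0,w_1)}(d-|w|) + \frac{p+q}{2pq} - 1$; the terms $\pm\bigl(\frac{p+q}{2pq} - 1\bigr)$ cancel, leaving $\Delta_{(w_2;w_0,w_1)}(d) = -R_{(w_2;w_0,w_1)}(d-|w|)$, as claimed. I expect the only real obstacle to be the bookkeeping in the first step: one must carefully track the modular inverses through Remark~\ref{remark:DELTA} so that the index $r = (q^{-1}k \bmod p)$ produced by Definition~\ref{defDELTA} really coincides with the $r = (w_1^{-1}d \bmod w_2)$ used in Lemmas~\ref{lemma:LgR12} and~\ref{lemma:LgR}; once the indices are matched, everything else is pure cancellation.
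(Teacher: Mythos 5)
Your proof is correct and follows essentially the same route as the paper: the same choice of $p$, $q$, $r$, the same use of Lemma~\ref{lemma:LgR12} and Lemma~\ref{lemma:LgR}, and the same application of the numerical Riemann--Roch formula~\eqref{eq:numerical-RR} specialized to the weight vector $(p,q,1)$ at degree $qr-p-q$ (the paper phrases this as the shifted identity~\eqref{eq:numerical-RR2}, which is exactly the cancellation of polynomial parts you verify by hand). Your explicit check that $r=(q^{-1}k \bmod p)$ coincides with $(w_1^{-1}d \bmod w_2)$ is the same index bookkeeping the paper records in Remark~\ref{remark:DELTA} and in the statement of Lemma~\ref{lemma:LgR}, so no discrepancy remains.
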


\begin{proof}
Shifting~\eqref{eq:numerical-RR} by $-|w|$ and recalling the definition of $g_w(d)$ from~\eqref{eq:virtual-genus},
one obtains the equivalent formula
\begin{equation}\label{eq:numerical-RR2}
\Eh_{\w}(d-|w|) = g_w(d) + R_w(d-|w|).
\end{equation}
Let us consider $p = w_2$, $q = (- w_0^{-1} w_1 \mod w_2)$, and $r = (w_1^{-1} d \mod w_2) = (-(qw_0)^{-1}d \mod w_2)$
as in Lemma~\ref{lemma:LgR}. Substituting $w$ by $(p,q,1)$ and $d$ by $qr+1$ in formula~\eqref{eq:numerical-RR2},
one obtains
\[
\Eh_{\w}(qr-p-q) = g_{(p,q,1)}(qr+1) + R_{(p,q,1)}(qr-p-q).
\]
Then Lemma~\ref{lemma:LgR} yields
\[
A^{(p,q)}_r = \delta^{(p,q)}_r - R_{(w_2;w_0,w_1)}(d-|w|).
\]
Finally the claim follows from the observation
\[
\Delta_{(w_2;w_0,w_1)}(d) = \Delta_{(p;-1,q)}(qr) = A^{(p,q)}_r - \delta^{(p,q)}_r,
\]
see Definition~\ref{defDELTA} and Remark~\ref{remark:DELTA}.
\end{proof}

Now we are ready to proof the main result of this paper.

\begin{proof}[Proof of Theorem{\rm~\ref{thm:RR-formula}}]
Let us consider the expression~\eqref{eq:numerical-RR},
\[
\Eh_{\w}(d) = 1 + \frac{d(d+|w|)}{2\bar{w}} + R_{w}(d).
\]
Recall that $\Eh_{\w}(d) = \chi(\PP^2_w,\cO_{\PP^2_w}(D))$ from~\eqref{eq:euler-char-OPD}
and $\frac{d(d+|w|)}{2\bar{w}} = \frac{1}{2} D \cdot (D-K_{\PP^2_w})$ from~\eqref{eq:12DDK}.
By definition, see~\eqref{eq:RR-3terms},
\[
R_{w}(d) = R_{(w_0;\, w_1,w_2)}(d) + R_{(w_1;\, w_0,w_1)}(d) + R_{(w_2;\, w_0,w_1)}(d).
\]
Finally Proposition~\ref{prop:Delta-R} allows us to rewrite these three addends in terms of
the $\Delta_P$-invariant, namely $R_{(w_k;w_i,w_j)}(d) = - \Delta_{(w_k;w_i,w_j)}(d+|w|)$.
Moreover, each term corresponds to a singular point of $\PP^2_w$, see \S\ref{sec:weight-proj-plane}.
Now the proof is complete.
\end{proof}

As a consequence of this study, in the following two corollaries, we obtain some properties of the local
and global correction terms.

\begin{cor}\label{cor:global-duality}
Let $D$ be a divisor in $\PP^2_w$ and consider $R_{\PP^2_w}(D)$ be the correction term in the Riemann-Roch
formula such that
\[
\chi(\PP^2_w, \cO_{\PP^2_w}(D)) = 1 + \frac{1}{2} D \cdot (D - K_{\PP^2_w}) + R_{\PP^2_w}(D).
\]
Then $R_{\PP^2_w}(D) = R_{\PP^2_w}(K_{\PP^2_w}-D)$.
\end{cor}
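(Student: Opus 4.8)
The plan is to prove the identity $R_{\PP^2_w}(D) = R_{\PP^2_w}(K_{\PP^2_w} - D)$ by exploiting Serre duality at the level of the Riemann--Roch formula established in Theorem~\ref{thm:RR-formula}. First I would apply that formula to the divisor $D' := K_{\PP^2_w} - D$, which has degree $-|w| - d$. This gives
\[
\chi(\PP^2_w, \cO_{\PP^2_w}(D')) = 1 + \tfrac{1}{2} D' \cdot (D' - K_{\PP^2_w}) + R_{\PP^2_w}(D').
\]
A direct computation shows $D' \cdot (D' - K_{\PP^2_w}) = (K_{\PP^2_w} - D)\cdot(-D) = D\cdot(D - K_{\PP^2_w})$, so the quadratic terms in the two Riemann--Roch expressions for $D$ and $D'$ coincide. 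Hence the claim $R_{\PP^2_w}(D) = R_{\PP^2_w}(D')$ is equivalent to the symmetry $\chi(\PP^2_w, \cO_{\PP^2_w}(D)) = \chi(\PP^2_w, \cO_{\PP^2_w}(K_{\PP^2_w} - D))$ of Euler characteristics.

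Next I would establish that symmetry of Euler characteristics using Serre duality on $\PP^2_w$, exactly as invoked in~\S\ref{sec:Euler-intersection}: for each $i$ one has $H^i(\PP^2_w, \cO_{\PP^2_w}(D)) \cong H^{2-i}(\PP^2_w, \cO_{\PP^2_w}(K_{\PP^2_w} - D))^{\vee}$, so that
\[
\chi(\PP^2_w, \cO_{\PP^2_w}(K_{\PP^2_w} - D)) = \sum_{i=0}^{2} (-1)^i h^i(\PP^2_w, \cO_{\PP^2_w}(K_{\PP^2_w}-D)) = \sum_{i=0}^{2} (-1)^i h^{2-i}(\PP^2_w, \cO_{\PP^2_w}(D)).
\]
Since $\PP^2_w$ is a surface, reindexing the last sum by $j = 2 - i$ gives back $\chi(\PP^2_w, \cO_{\PP^2_w}(D))$; note the sign $(-1)^{2-j} = (-1)^j$ causes no trouble because the dimension is even. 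Combining this with the matching of the quadratic terms yields $R_{\PP^2_w}(D) = R_{\PP^2_w}(K_{\PP^2_w} - D)$, which is the assertion.

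An alternative, purely local route would be to verify the duality termwise on the singular points: by the proof of Theorem~\ref{thm:RR-formula} one has $R_{\PP^2_w}(D) = -\sum_{P} \Delta_P(d + |w|)$, and the degree of $K_{\PP^2_w} - D$ is $-|w| - d$, so one would need $\Delta_P(d + |w|) = \Delta_P(-|w| - d + |w|) = \Delta_P(-d)$ for each $P$, i.e.\ a local duality $\Delta_{(w_k; w_i, w_j)}(k) = \Delta_{(w_k; w_i, w_j)}(|w| - k)$ for the combinatorial invariant. This can be checked directly from Definition~\ref{defDELTA} together with Proposition~\ref{prop:Delta-R} and the root-of-unity formula~\eqref{eq:R-root-unity}, replacing the summation index $i$ by $w_k - i$ (equivalently $\zeta$ by $\zeta^{-1}$) and simplifying. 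I would present the global Serre-duality argument as the main proof since it is shortest, and mention the local version as a remark if space permits.

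The routine computation $D' \cdot (D' - K_{\PP^2_w}) = D \cdot (D - K_{\PP^2_w})$ and the vanishing of $H^1$ are already available from~\S\ref{sec:Euler-intersection}, so no real obstacle remains; the only point to state carefully is the bookkeeping of signs in the Serre-duality reindexing, which is where a careless argument could go wrong, but on a surface it works out cleanly.
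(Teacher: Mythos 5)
Your proposal is correct and follows essentially the same route as the paper's own proof: Serre duality gives $\chi(\PP^2_w,\cO_{\PP^2_w}(D)) = \chi(\PP^2_w,\cO_{\PP^2_w}(K_{\PP^2_w}-D))$, the quadratic term $\frac{1}{2}D\cdot(D-K_{\PP^2_w})$ is invariant under $D \mapsto K_{\PP^2_w}-D$, and the equality of correction terms follows. The extra details you supply (the explicit check of the quadratic term and the cohomological reindexing, harmless here since $h^1=0$) and the sketched local alternative are fine but not needed beyond what the paper records.
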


\begin{proof}
By Serre's duality $\chi(\PP^2_w, \cO_{\PP^2_w}(D)) = \chi(\PP^2_w, \cO_{\PP^2_w}(K_{\PP^2_w}-D))$.
Note that the term $\frac{1}{2} D \cdot (D - K_{\PP^2_w})$ also remains invariant after substituting
$D$ by $K_{\PP^2_w}-D$. Then the same happens for $R_{\PP^2_w}(D)$ and the claim follows.
\end{proof}

\begin{cor}
Take $X = X(w_2;w_0,w_1)$ and denote by $|w|=w_0+w_1+w_2$. Then the following holds:
\begin{enumerate}[label=\rm(\arabic{enumi})]
\item $R_X(d) = R_X(-|w|-d)$ and $\Delta_X(d) = \Delta_X(|w|-d)$,
\item $R_X(d) = - \Delta_X(-d)$,
\item $R_X(-|w|) = \Delta_X(|w|) = 0$.
\end{enumerate}
\end{cor}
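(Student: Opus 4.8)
The three properties concern the single cyclic quotient singularity $X = X(w_2;w_0,w_1)$, so everything should follow by specialising the global identities already established for $\PP^2_w$ to one summand, together with the dictionary $R_{(w_2;w_0,w_1)}(d) = -\Delta_{(w_2;w_0,w_1)}(d+|w|)$ from Proposition~\ref{prop:Delta-R}. The plan is to prove (2) first from the root-of-unity formula~\eqref{eq:R-root-unity}, then deduce (1) by combining (2) with the local version of Serre duality already packaged in Proposition~\ref{prop:Delta-R} and Corollary~\ref{cor:global-duality}, and finally read off (3) as the value of these functions at the ``boundary'' argument $d = -|w|$.

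\textbf{Step 1: property (2).} Starting from~\eqref{eq:R-root-unity},
\[
R_X(d) = -\frac{1}{w_2}\sum_{i=1}^{w_2-1}\frac{1-\zeta_{w_2}^{-id}}{(1-\zeta_{w_2}^{iw_0})(1-\zeta_{w_2}^{iw_1})},
\]
I would compare $R_X(-d)$ with $-\Delta_X(d)$. By Proposition~\ref{prop:Delta-R}, $\Delta_X(d) = -R_X(d-|w|)$, so $-\Delta_X(-d) = R_X(-d-|w|)$; hence (2) is equivalent to the assertion $R_X(d) = R_X(-d-|w|)$, which is exactly the first half of (1). So (1)(first equality) and (2) are the same statement, and it suffices to prove one of them directly. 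I would prove $R_X(d) = R_X(-|w|-d)$ by a change of summation index $i \mapsto w_2 - i$ (equivalently $\zeta_{w_2}^i \mapsto \zeta_{w_2}^{-i}$) in~\eqref{eq:R-root-unity}: under this substitution the denominator $(1-\zeta_{w_2}^{iw_0})(1-\zeta_{w_2}^{iw_1})$ becomes $(1-\zeta_{w_2}^{-iw_0})(1-\zeta_{w_2}^{-iw_1})$, and after clearing the resulting negative exponents by multiplying numerator and denominator by $\zeta_{w_2}^{i(w_0+w_1)}$ one finds the numerator $1-\zeta_{w_2}^{-id}$ turns into $\zeta_{w_2}^{i(w_0+w_1)} - \zeta_{w_2}^{i(w_0+w_1+d)} = \zeta_{w_2}^{i|w|} \cdot \zeta_{w_2}^{-i w_2}(\cdots)$; a short bookkeeping of exponents modulo $w_2$ identifies this with the summand of $R_X(-|w|-d)$. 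This is the same cancellation pattern used repeatedly in the proof of Lemma~\ref{lemma:LgR}, so no new idea is required. The second half of (1), $\Delta_X(d) = \Delta_X(|w|-d)$, then follows by applying Proposition~\ref{prop:Delta-R} twice: $\Delta_X(d) = -R_X(d-|w|) = -R_X(-|w|-(d-|w|)) = -R_X(-d) = \Delta_X(|w|-d)$, where the middle equality is the $R$-symmetry just proved.

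\textbf{Step 2: property (3).} For $R_X(-|w|)$ I would simply set $d = -|w|$ in~\eqref{eq:R-root-unity}. Then $\zeta_{w_2}^{-id} = \zeta_{w_2}^{i|w|} = \zeta_{w_2}^{i(w_0+w_1+w_2)} = \zeta_{w_2}^{i(w_0+w_1)}$, so the numerator becomes $1 - \zeta_{w_2}^{i(w_0+w_1)}$; factoring this as $(1-\zeta_{w_2}^{iw_0}) + \zeta_{w_2}^{iw_0}(1-\zeta_{w_2}^{iw_1})$ splits the sum into $-\frac{1}{w_2}\sum_{i=1}^{w_2-1}\frac{1}{1-\zeta_{w_2}^{iw_1}} - \frac{1}{w_2}\sum_{i=1}^{w_2-1}\frac{\zeta_{w_2}^{iw_0}}{1-\zeta_{w_2}^{iw_0}}$. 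Each of these is a standard sum over nontrivial $w_2$-th roots of unity; using the elementary identities recorded in \S\ref{sec:formulaq} of the appendix (the same ones invoked in~\eqref{eq:Rqp1} and in Lemma~\ref{lemma:LgR}), namely $\sum_{i=1}^{w_2-1}\frac{1}{1-\zeta^{i}} = \frac{w_2-1}{2}$ and $\sum_{i=1}^{w_2-1}\frac{\zeta^{i}}{1-\zeta^{i}} = -\frac{w_2-1}{2}$, the two contributions are $-\frac{w_2-1}{2w_2}$ and $+\frac{w_2-1}{2w_2}$ and cancel, giving $R_X(-|w|) = 0$. Then $\Delta_X(|w|) = -R_X(|w|-|w|)\cdot(\text{via Prop.~\ref{prop:Delta-R}})$, or more directly $\Delta_X(|w|) = -R_X(0)$ and by (1) $R_X(0) = R_X(-|w|) = 0$, so $\Delta_X(|w|) = 0$ as well; alternatively one can note $\Delta_X$ vanishes on $0$ by Definition~\ref{defDELTA} (take $k=0$, so $r=0$, and both $A^{(p,q)}_0 = 0$ and $\delta^{(p,q)}_0 = 0$) and invoke the symmetry $\Delta_X(d) = \Delta_X(|w|-d)$.

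\textbf{Main obstacle.} There is no conceptual obstacle; the only thing requiring care is the exponent bookkeeping modulo $w_2$ in Step 1, where the shift by $|w|$ rather than by $w_0+w_1$ is what makes the $\zeta_{w_2}^{i w_2}=1$ terms drop out cleanly, and making sure in Step 2 that the two elementary root-of-unity sums are applied with the correct orientation ($\zeta_{w_2}^{iw_0}$ and $\zeta_{w_2}^{iw_1}$ both range over all nontrivial $w_2$-th roots of unity because $\gcd(w_0,w_2)=\gcd(w_1,w_2)=1$). I would present (1) and (2) as a single computation and then derive (3) as the evaluation at the distinguished point, remarking that (3) also falls out of Corollary~\ref{cor:global-duality} and the fact that $\Delta_X(0)=0$.
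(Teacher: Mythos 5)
Your argument is correct, but it establishes the one point that genuinely needs a proof --- the symmetry $R_X(d)=R_X(-|w|-d)$ --- by a different route than the paper. The paper obtains it geometrically: writing $X=X(p;q,1)$ and choosing $r=(q^{-1}d+1 \bmod p)$ so that $R_X(d)=R_X(qr-q)$ and $R_X(-|w|-d)=R_X(-qr-1)$, it applies the global duality $R_{\PP^2_{(p,q,1)}}(D)=R_{\PP^2_{(p,q,1)}}(K_{\PP^2_{(p,q,1)}}-D)$ of Corollary~\ref{cor:global-duality} (a consequence of Serre duality), splits both global correction terms into their local summands, and cancels the contribution of the point of type $(q;p,1)$ by the computation of~\eqref{eq:Rqp1}; what remains is exactly the asserted local equality. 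You instead manipulate the closed formula~\eqref{eq:R-root-unity} directly via $i\mapsto w_2-i$ and clearing of exponents, which is more elementary and self-contained, while the paper's proof recycles the global statement it has already established. One presentational caveat in your Step 1: after the substitution the summands do \emph{not} match those of $R_X(-|w|-d)$ term by term; the two sums differ by $-\frac{1}{w_2}\sum_{i=1}^{w_2-1}\bigl(\zeta_{w_2}^{i(w_0+w_1)}-1\bigr)/\bigl((1-\zeta_{w_2}^{iw_0})(1-\zeta_{w_2}^{iw_1})\bigr)$, which is exactly $R_X(-|w|)$ up to sign --- precisely the quantity your Step 2 shows to vanish (note also that $\sum_{i}\zeta^{i}/(1-\zeta^{i})=-(w_2-1)/2$ is not literally in \S\ref{sec:formulaq}, though it follows at once from the identity proved there). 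So Step 1 should be phrased as reducing the symmetry to Step 2, rather than as a term-by-term identification. Your observation that (2) is equivalent to the first half of (1) via Proposition~\ref{prop:Delta-R}, and your deduction of the remaining assertions from the symmetry together with $R_X(0)=\Delta_X(0)=0$, coincide with the paper's own reductions.
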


\begin{proof}
It is enough to prove $R_X(d) = R_X(-|w|-d)$. The rest of the formulas follows from Proposition~\ref{prop:Delta-R}
and the fact that $R_X(0) = \Delta_X(0) = 0$ by definition.

Assume without loss of generality that $X = X(p;q,1)$ with $\gcd(p,q)=1$. Consider $r = (q^{-1}d+1 \mod p)$
so that $R_X(d) = R_X(qr-q)$ and $R_X(-|w|-d) = R_X(-qr-1)$. Now we use the duality of the global
correction term to show the duality for~$R_X$.

By Corollary~\ref{cor:global-duality}, $R_{(p,q,1)}(qr-p-q) = R_{(p,q,1)}(-qr-1)$.
By definition
\[
\begin{aligned}
R_{(p,q,1)}(qr-p-q) &= R_{(p;q,1)}(qr-q) + R_{(q;p,1)}(-p), \\[2.5pt]
\rotatebox{90}{$=$} \hspace{35pt} \\
R_{(p,q,1)}(-qr-1) &= R_{(p;q,1)}(-qr-1) + R_{(q;p,1)}(-1).
\end{aligned}
\]
One can show as in~\eqref{eq:Rqp1} that $R_{(q;p,1)}(-p) = R_{(q;p,1)}(-1) = \frac{q-1}{2q}$,
cf.~\S\ref{sec:formulaq}, and thus we obtain $R_{(p;q,1)}(qr-q) = R_{(p;q,1)}(-qr-1)$ as required.
\end{proof}


\section{Effective computation of the correction term}\label{sec:effective-correction-term}

In this section we will develop an algorithm to efficiently compute the correction term $R_{(d;a,b)}(k)$
or equivalently $\Delta_{(d;a,b)}(k)$. In particular, we will show Theorem~\ref{thm:correction-term}.
We need some preliminary results.

Consider a Weil divisor $D$ in $\PP^2_w$ of degree $d = \deg_w (D) \in \ZZ$.
Recall that
\[
h^0(\PP^2_w,\cO_{\PP^2_w}(D)) =
\begin{cases}
\Eh_{\w}(d) & \text{if $d \geq 0$}, \\
0 & \text{if $d < 0$}.
\end{cases}
\]
By Serre's duality,
\[
h^2(\PP^2_w,\cO_{\PP^2_w}(D)) =
h^0(\PP^2_w,\cO_{\PP^2_w}(K_{\PP^2_w}-D)) =
\begin{cases}
\Eh_{\w}(-|w|-d) & \text{if $d \leq -|w|$}, \\
0 & \text{if $d > -|w|$}.
\end{cases}
\]
On the other hand, $h^1(\PP^2_w,\cO_{\PP^2_w}(D))$ vanishes for all $d \in \ZZ$, \cite[\S 1.4]{Dol82}.
Then we have just proven that
\begin{equation}\label{eq:vanishing-chi}
\chi(\PP_w^2, \cO_{\PP^2_w}(D)) = 0, \quad -|w| < \deg_w(D) < 0,
\end{equation}
see Figure~\ref{fig:vanishing-hi}.
\begin{figure}[ht]
\centering
\begin{tikzpicture}
\begin{footnotesize}
\draw[fill=lightgray] (-4,0.0) rectangle node[]{$\chi=0$} (-1,0.5);
\draw[very thick] (-7,0) -- (2,0) node[right]{$\ZZ$};
\draw (-5,-0.1) node[below]{\hspace{-12pt}$-|w|$} -- (-5,0.1);
\draw (-4,0) node[below]{$-|w|+1$};
\draw (-1,0) node[below]{$-1$};
\draw (0,-0.1) node[below]{$0$} -- (0,0.1);
\draw[dashed,->] (0,0) -- (0,1.5) -- (-6,1.5) node[left]{$h^0=0$};
\draw[dashed,->] (-5,0) -- (-5,1) -- (1,1) node[right]{$h^2=0$};
\draw[<->] (-6,2) -- node[above]{$h^1=0$} (1,2);
\end{footnotesize}
\end{tikzpicture}
\caption{Vanishing of $H^i(\PP^2_w,\cO_{\PP^2_w}(D))$, $i=0,1,2$.}
\label{fig:vanishing-hi}
\end{figure}

One of the strategies of this section is to use birational morphisms and resolution of singularities. Even if $D$ is a Weil divisor,
its pull-back is in general a $\QQ$-divisor. Also the relative canonical divisor of a morphism is a $\QQ$-divisor,
see~\cite{Sak84}.
To deal with $\QQ$-divisors we need to introduce some notation.

\begin{dfn}\label{dfn:floor-fpart}
For a $\QQ$-divisor $D$ we write $D = \floor{D} + \fpart{D}$, where $\fpart{D}$ is the fractional part of $D$
and $\floor{D}$ is the integral part of $D$. If $D = \sum_i a_i D_i$ is the decomposition of $D$ into prime divisors,
then we have
\[
\floor{D} = \sum_i \floor{a_i} D_i, \qquad
\fpart{D} = \sum_i \fpart{a_i} D_i,
\]
where $\floor{a_i} \in \ZZ$, $a_i-1 < \floor{a_i} \leq a_i$, $a_i = \floor{a_i} + \fpart{a_i}$, $0 \leq \fpart{a_i} < 1$.
\end{dfn}

\begin{lemma}\label{lemma:fpart}
Let $d,q \in \ZZ$ be two coprime integers with $d \geq 1$. Then,
\[
R_{(d;1,-q)}(k) + R_{(d;1,q)}(k) + \fpart{\frac{k}{d}} = 0,
\quad \forall k \in \ZZ.
\]
\end{lemma}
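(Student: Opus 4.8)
The plan is to reduce everything to the root-of-unity formula~\eqref{eq:R-root-unity} and a direct algebraic manipulation. Recall that for a primitive $d$th root of unity $\zeta_d$,
\[
R_{(d;1,q)}(k) = -\frac{1}{d} \sum_{i=1}^{d-1} \frac{1-\zeta_d^{-ik}}{(1-\zeta_d^{i})(1-\zeta_d^{iq})},
\qquad
R_{(d;1,-q)}(k) = -\frac{1}{d} \sum_{i=1}^{d-1} \frac{1-\zeta_d^{-ik}}{(1-\zeta_d^{i})(1-\zeta_d^{-iq})}.
\]
First I would add these two sums term by term. For each fixed $i \in \{1,\dots,d-1\}$, the two summands share the factor $(1-\zeta_d^{-ik})/(1-\zeta_d^{i})$ and differ only in the factor $1/(1-\zeta_d^{iq})$ versus $1/(1-\zeta_d^{-iq})$. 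Combining the latter over a common denominator gives
\[
\frac{1}{1-\zeta_d^{iq}} + \frac{1}{1-\zeta_d^{-iq}} = \frac{(1-\zeta_d^{-iq}) + (1-\zeta_d^{iq})}{(1-\zeta_d^{iq})(1-\zeta_d^{-iq})} = \frac{2 - \zeta_d^{iq} - \zeta_d^{-iq}}{(1-\zeta_d^{iq})(1-\zeta_d^{-iq})} = 1,
\]
since the numerator and denominator of that last fraction coincide. (Here I use $\gcd(d,q)=1$, so $\zeta_d^{iq} \neq 1$ for $1 \le i \le d-1$, making every denominator nonzero.)

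Consequently,
\[
R_{(d;1,-q)}(k) + R_{(d;1,q)}(k) = -\frac{1}{d} \sum_{i=1}^{d-1} \frac{1-\zeta_d^{-ik}}{1-\zeta_d^{i}}.
\]
The remaining step is to identify this sum with $-\fpart{k/d}$. This is a classical Dedekind-type evaluation: one shows
\[
\frac{1}{d} \sum_{i=1}^{d-1} \frac{1-\zeta_d^{-ik}}{1-\zeta_d^{i}} = \fpart{\frac{k}{d}}.
\]
One way to see it is to note that both sides are functions of $k \bmod d$, so it suffices to check $0 \le k < d$; for such $k$ one has $\fpart{k/d} = k/d$, and the identity $\frac{1}{d}\sum_{i=1}^{d-1} \frac{1-\zeta_d^{-ik}}{1-\zeta_d^{i}} = k/d$ follows from the standard expansion $\frac{1}{1-\zeta_d^i} = \sum_{m=0}^{d-1} \frac{?}{\;}$—more cleanly, from the partial-fraction identity $\sum_{i=1}^{d-1}\frac{\zeta_d^{-im}}{1-\zeta_d^i} = \frac{d-1}{2} - m$ valid for $0 \le m \le d-1$, applied with $m=0$ and $m=k$ and subtracted. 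Alternatively this is exactly the special case $R_{(d;1,0)}$-type computation already invoked in the paper (cf.~\S\ref{sec:formulaq}), so I would cite that appendix. Combining the two displayed identities yields $R_{(d;1,-q)}(k) + R_{(d;1,q)}(k) + \fpart{k/d} = 0$ for all $k \in \ZZ$, as claimed.

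The only genuine obstacle is the evaluation $\frac{1}{d}\sum_{i=1}^{d-1}\frac{1-\zeta_d^{-ik}}{1-\zeta_d^i} = \fpart{k/d}$; everything else is the one-line root-of-unity cancellation above. Since the paper already establishes sums of precisely this flavour in \S\ref{sec:formulaq} of the appendix, I expect this step to be a short reference rather than a new computation. I would therefore present the proof as: (i) the termwise identity $\frac{1}{1-\zeta_d^{iq}}+\frac{1}{1-\zeta_d^{-iq}}=1$; (ii) the resulting collapse to the single sum $-\frac1d\sum \frac{1-\zeta_d^{-ik}}{1-\zeta_d^i}$; (iii) the appeal to the appendix (or a two-line partial-fraction argument) to identify it with $-\fpart{k/d}$.
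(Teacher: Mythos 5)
Your proof is correct, but it follows a genuinely different route from the paper. You work directly with the root-of-unity expression~\eqref{eq:R-root-unity}: the termwise identity $\frac{1}{1-z}+\frac{1}{1-z^{-1}}=1$ (valid since $\gcd(d,q)=1$ keeps every denominator nonzero) collapses the two correction terms to $-\frac{1}{d}\sum_{i=1}^{d-1}\frac{1-\zeta_d^{-ik}}{1-\zeta_d^i}$, and the classical evaluation of that sum as $\fpart{k/d}$ finishes the argument. The paper instead argues geometrically: writing $p=d-q$, it applies the vanishing $\chi(\PP^2_{w},\cO(k))=0$ for $-|w|<k<0$ to the three weight vectors $(d,p,1)$, $(d,1,q)$, $(1,p,q)$, combines the resulting instances of the numerical Riemann--Roch formula, computes that the three virtual genus terms add up to $1+\frac{k}{d}=\fpart{k/d}$, and then extends to all $k$ by periodicity. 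Your approach is more elementary and self-contained at the level of the defining formula, while the paper's reuses the global machinery and needs no root-of-unity identity beyond the one in the appendix. One small caveat: the appendix only establishes $\sum_{i=1}^{d-1}\frac{1}{1-\zeta_d^i}=\frac{d-1}{2}$, so citing it does not by itself give $\frac{1}{d}\sum_{i=1}^{d-1}\frac{1-\zeta_d^{-ik}}{1-\zeta_d^i}=\fpart{\frac{k}{d}}$; you do need the extra identity $\sum_{i=1}^{d-1}\frac{\zeta_d^{-im}}{1-\zeta_d^i}=\frac{d-1}{2}-m$ for $0\le m\le d-1$ (or, even quicker, expand $\frac{1-\zeta_d^{-ik}}{1-\zeta_d^i}=-\sum_{j=1}^{k}\zeta_d^{-ij}$ as a geometric sum and use $\sum_{i=1}^{d-1}\zeta_d^{-ij}=-1$ for $d\nmid j$, giving the value $k$ directly). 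Since you state this identity correctly and it is a short standard computation, this is a loose end to write out rather than a gap.
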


\begin{proof}
Assume $k$ is not a multiple of $d$, otherwise the statement clearly follows.
Let us first consider the case $-d < k < 0$, below we will show the statement for a general $k \in \ZZ$.
To simplify notation, denote by $p=d-q$ so that $-q \equiv p \mod d$.
Consider three weight vectors $w_1 = (d,p,1)$, $w_2 = (d,1,q)$, $w_3 = (1,p,q)$.
Using~\eqref{eq:vanishing-chi}, since $-d < k < 0$, one has
\[
\begin{aligned}
0 &= \chi(\PP_{w_1}^2, \cO_{\PP^2_{w_1}}(k)) = g_{w_1}(k+d+p+1) + R_{(d;1,p)}(k) + R_{(p;1,d)}(k), \\
0 &= \chi(\PP_{w_2}^2, \cO_{\PP^2_{w_2}}(k)) = g_{w_2}(k+d+1+q) + R_{(d;1,q)}(k) + R_{(q;1,d)}(k), \\
0 &= \chi(\PP_{w_3}^2, \cO_{\PP^2_{w_3}}(k)) = g_{w_3}(k+1+p+q) + R_{(p;1,q)}(k) + R_{(q;1,p)}(k).
\end{aligned}
\]
Note that $R_{(p;1,d)}(k) = R_{(p;1,q)}(k)$ and $R_{(q;1,d)}(k) = R_{(q;1,p)}(k)$, since $d = p + q$.
Then subtracting the third equation from the sum of the first two, one obtains
\[
R_{(d;1,p)}(k) + R_{(d;1,q)}(k) + g_{w_1}(k+d+p+1) + g_{w_2}(k+d+1+q) - g_{w_3}(k+1+p+q) = 0.
\]
A straightforward computation provides
\[
g_{w_1}(k+d+p+1) + g_{w_2}(k+d+1+q) - g_{w_3}(k+1+p+q) = 1 + \frac{k}{d} = \fpart{\frac{k}{d}}.
\]
For a general $k \in \ZZ$ with $k \not\equiv 0 \mod d$, consider the Euclidean division $k = cd + r$, $0 < r < d$.
Hence $-d < r-d < 0$ and one can apply the statement for $k' := r-d$. The result follows from the fact that
$R_{(d;1,p)}(k') = R_{(d;1,p)}(k)$, $R_{(d;1,q)}(k') = R_{(d;1,q)}(k)$, and~$\fpart{\frac{k'}{d}} = \fpart{\frac{k}{d}}$.
\end{proof}

The following result is a generalization of \cite[\S 1.2]{Bla95} when the morphism $\pi: \tilde{X} \to X$ is not necessarily
a resolution of $X$. In order to prove Theorem~\ref{thm:correction-term}, we will use this result for partial resolutions.

\begin{prop}\label{prop:correction-term}
Let $\pi: \tilde{X} \to X$ be a birational morphism between two (not necessarily smooth) projective algebraic surfaces
and consider $D$ a Weil divisor in $X$.
Then
\[
R_X(D) = - \frac{1}{2} \fpart{ \pi^* D } \cdot ( \floor{\pi^* D} - K_{\tilde{X}} ) + R_{\tilde{X}}(\floor{\pi^* D}).
\]
\end{prop}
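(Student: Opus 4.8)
The plan is to compare the Riemann–Roch formula on $X$ with the one on $\tilde{X}$, applied to a suitable divisor whose pushforward is $D$. Concretely, I would first recall that for a birational morphism $\pi$ the Euler characteristics agree: $\chi(\tilde X,\cO_{\tilde X}(\floor{\pi^*D})) = \chi(X,\cO_X(D))$. The reason is that $\pi_*\cO_{\tilde X}(\floor{\pi^*D}) = \cO_X(D)$ (the floor is exactly what is needed for the round-down of a pullback to push forward correctly, since $\fpart{\pi^*D}$ is $\pi$-exceptional and effective) together with $R^i\pi_*\cO_{\tilde X}(\floor{\pi^*D}) = 0$ for $i>0$; then the Leray spectral sequence gives equality of the global Euler characteristics. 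This is the step I expect to be the main obstacle: justifying the vanishing $R^i\pi_* = 0$ in the generality stated (a birational morphism of possibly singular surfaces, not a resolution). One would argue fiber-by-fiber — the fibers are at most curves and $\floor{\pi^*D}$ restricted to a fiber has nonnegative degree components — or, following Blache's approach, reduce to the case handled in \cite[\S1.2]{Bla95} by factoring $\pi$ through a resolution.

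Granting that, I would write the Riemann–Roch formula (the version displayed just before Corollary~\ref{cor:global-duality}, equivalently Theorem~\ref{thm:RR-formula}) on both surfaces:
\[
\chi(X,\cO_X(D)) = 1 + \tfrac{1}{2} D\cdot(D - K_X) + R_X(D),
\]
\[
\chi(\tilde X,\cO_{\tilde X}(\floor{\pi^*D})) = 1 + \tfrac{1}{2}\floor{\pi^*D}\cdot(\floor{\pi^*D} - K_{\tilde X}) + R_{\tilde X}(\floor{\pi^*D}).
\]
Setting the left-hand sides equal and solving for $R_X(D)$ yields
\[
R_X(D) = R_{\tilde X}(\floor{\pi^*D}) + \tfrac{1}{2}\floor{\pi^*D}\cdot(\floor{\pi^*D} - K_{\tilde X}) - \tfrac{1}{2} D\cdot(D-K_X).
\]
So the whole proof reduces to the intersection-theoretic identity
\[
\tfrac{1}{2} D\cdot(D-K_X) - \tfrac{1}{2}\floor{\pi^*D}\cdot(\floor{\pi^*D}-K_{\tilde X}) = \tfrac{1}{2}\fpart{\pi^*D}\cdot(\floor{\pi^*D} - K_{\tilde X}).
\]

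To prove that identity I would use the projection formula and the standard facts about $\pi$: intersection numbers are preserved under pullback, $\pi^*D\cdot E = 0$ for any $\pi$-exceptional $E$ against a pullback class, and $K_{\tilde X} = \pi^*K_X + E_\pi$ where $E_\pi$ is the ($\pi$-exceptional) relative canonical divisor, with $\pi_*\cO_{\tilde X}(E_\pi)=\cO_X$. Writing $\pi^*D = \floor{\pi^*D} + \fpart{\pi^*D}$ and expanding $\tfrac{1}{2}\pi^*D\cdot(\pi^*D - K_{\tilde X})$, the pullback part equals $\tfrac{1}{2}D\cdot(D-K_X)$ (since $\pi^*D\cdot\pi^*K_X = D\cdot K_X$ and $\pi^*D\cdot E_\pi = 0$, $(\pi^*D)^2 = D^2$); expanding the same quantity via $\floor{\pi^*D}+\fpart{\pi^*D}$ and cancelling the common $\tfrac{1}{2}\floor{\pi^*D}\cdot(\floor{\pi^*D}-K_{\tilde X})$ term leaves exactly $\tfrac{1}{2}\fpart{\pi^*D}\cdot(2\floor{\pi^*D} + \fpart{\pi^*D} - K_{\tilde X})$ versus $0$ — here one must be slightly careful, since $\fpart{\pi^*D}\cdot\pi^*D$ need not vanish on a singular surface; the clean way is to never expand $\pi^*D$ asymmetrically but instead compute $\tfrac12 D\cdot(D-K_X) = \tfrac12\pi^*D\cdot(\pi^*D - \pi^*K_X) = \tfrac12 (\floor{\pi^*D}+\fpart{\pi^*D})\cdot(\floor{\pi^*D}+\fpart{\pi^*D} - K_{\tilde X} + E_\pi)$ and then use $\fpart{\pi^*D}\cdot E_\pi$-type vanishings together with $\floor{\pi^*D}\cdot E_\pi$ absorbing into the exceptional locus where the floor is chosen. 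Expanding and grouping terms, the difference with $\tfrac12\floor{\pi^*D}\cdot(\floor{\pi^*D}-K_{\tilde X})$ collapses precisely to $\tfrac12\fpart{\pi^*D}\cdot(\floor{\pi^*D}-K_{\tilde X})$ after using that the purely-exceptional cross terms vanish, which finishes the argument.
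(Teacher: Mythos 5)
Your overall strategy coincides with the paper's: equate $\chi(X,\cO_X(D))$ with $\chi(\tilde X,\cO_{\tilde X}(\floor{\pi^*D}))$, write the Riemann--Roch formula on both surfaces, and reduce everything to the intersection identity $\tfrac12 D\cdot(D-K_X)-\tfrac12\floor{\pi^*D}\cdot\bigl(\floor{\pi^*D}-K_{\tilde X}\bigr)=\tfrac12\fpart{\pi^*D}\cdot\bigl(\floor{\pi^*D}-K_{\tilde X}\bigr)$. For the first step the paper does not argue fiber by fiber: it simply invokes the projection formula of \cite[Theorem 2.1]{Sak84}, which gives $\chi(X,\cO_X(D))=\chi(\tilde X,\cO_{\tilde X}(\floor{\pi^*D}))$ in the stated generality, so flagging this as ``the main obstacle'' is not where the difficulty lies.

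The genuine problem is in your justification of the intersection identity. First, your worry that $\fpart{\pi^*D}\cdot\pi^*D$ ``need not vanish on a singular surface'' is unfounded: since $D$ is an integral Weil divisor, the strict-transform part of $\pi^*D$ has integer coefficients, so $\fpart{\pi^*D}$ is purely $\pi$-exceptional, and the Mumford pullback $\pi^*D$ is orthogonal to every exceptional divisor; hence $\pi^*D\cdot\fpart{\pi^*D}=\pi^*D\cdot K_\pi=0$, which is precisely the vanishing the paper uses. Second, and more seriously, your ``clean way'' relies on ``$\fpart{\pi^*D}\cdot E_\pi$-type vanishings'' and on ``purely-exceptional cross terms vanish'', which is false in general: two exceptional divisors usually meet nontrivially (the exceptional intersection matrix is negative definite but not diagonal), so $\fpart{\pi^*D}\cdot K_\pi$, $\fpart{\pi^*D}\cdot\fpart{\pi^*D}$ and $\floor{\pi^*D}\cdot K_\pi$ do not vanish individually. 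What actually makes the expansion collapse is that these cross terms regroup into $\pi^*D\cdot\bigl(\fpart{\pi^*D}+K_\pi\bigr)=0$ by the orthogonality above. With that correction your computation becomes exactly the paper's: $D\cdot(D-K_X)=\pi^*D\cdot(\pi^*D-\pi^*K_X)=\pi^*D\cdot\bigl(\floor{\pi^*D}-K_{\tilde X}\bigr)=\bigl(\floor{\pi^*D}+\fpart{\pi^*D}\bigr)\cdot\bigl(\floor{\pi^*D}-K_{\tilde X}\bigr)$, and subtracting $\floor{\pi^*D}\cdot\bigl(\floor{\pi^*D}-K_{\tilde X}\bigr)$ yields the identity; as written, however, your final step rests on an incorrect vanishing claim.
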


\begin{proof}
The key strategy is to use the projection formula from \cite[Theorem 2.1]{Sak84}
that implies $\chi(X,\cO_X(D)) =
\chi(\tilde{X},\cO_{\tilde{X}}(\floor{\pi D}))$. By definition,
\begin{equation}\label{eq:chi-chi}
\left\{
\begin{aligned}
\chi(X,\cO_X(D)) &= 1 + \frac{1}{2} D \cdot (D - K_X) + R_X(D), \\[5pt]
\chi(\tilde{X},\cO_{\tilde{X}}(\floor{\pi^* D})) &=
1 + \frac{1}{2} \floor{\pi^* D} \cdot ( \floor{\pi^* D} - K_{\tilde{X}} ) + R_{\tilde{X}}(\floor{\pi^* D}).
\end{aligned}\right.
\end{equation}
The idea is to rewrite the second expression to find a relation between the corrections terms
$R_X(D)$ and $R_{\tilde{X}}(\floor{\pi^* D})$. First recall that $\floor{\pi^* D} = \pi^{*}D + \fpart{\pi^* D}$,
see Definition~\ref{dfn:floor-fpart},
\begin{equation}\label{eq:floor-pi-D}
\floor{\pi^* D} \cdot ( \floor{\pi^* D} - K_{\tilde{X}} ) =
\pi^* D \cdot ( \floor{\pi^* D} - K_{\tilde{X}} ) - \fpart{\pi^* D} \cdot ( \floor{\pi^* D} - K_{\tilde{X}} ).
\end{equation}
Also $K_{\tilde{X}} = \pi^* K_X + K_\pi$ and thus
\begin{equation}\label{eq:pi-D-long}
\begin{aligned}
\pi^* D \cdot ( \floor{\pi^* D} - K_{\tilde{X}} ) &=
\pi^* D \cdot ( \pi^* D - \fpart{\pi^* D} - \pi^* K_X - K_\pi ) \\
&= \pi^* D \cdot ( \pi^* D - \pi^* K_X ) \\
&= D \cdot ( D - K_X ),
\end{aligned}
\end{equation}
where we have used that $\pi^* D \cdot \fpart{\pi^* D} = \pi^* D \cdot K_\pi = 0$, since $\fpart{\pi^* D}$
and $K_{\pi}$ only have exceptional part, and $\pi^* D_1 \cdot \pi^* D_2 = D_1 \cdot D_2$ for any pair of Weil divisors
$D_1, D_2$ in $X$. Now \eqref{eq:chi-chi}, \eqref{eq:floor-pi-D}, \eqref{eq:pi-D-long} yield
\[
\chi(\tilde{X},\cO_{\tilde{X}}(\floor{\pi^* D})) = \chi(X,\cO_X(D))
- \frac{1}{2} \fpart{ \pi^* D } \cdot ( \floor{\pi^* D} - K_{\tilde{X}} )
+ R_{\tilde{X}}(\floor{\pi^* D}) - R_X(D).
\]
The claim follows from the projection formula as it was mentioned above.
\end{proof}

Now we are ready to show the second main result of this work.

\begin{proof}[Proof of Theorem{\rm~\ref{thm:correction-term}}]
Consider $\pi: \tilde{X} \to X = X(d;1,q)$ the weighted blowing-up at the origin with weights $(1,q)$.
Let $A$ be the divisor in $X$ given by $\{x=0\}$ and take $D = kA$.
Denote by $E$ the exceptional divisor of $\pi$ and $\widehat{A}$ the strict transform of $A$.
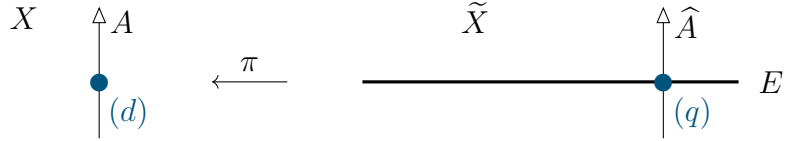
\begin{figure}[ht]
\centering
\begin{tikzpicture}[scale=1.0]
\definecolor{bluegreen2}{RGB}{0, 85, 127}
\colorlet{colsing}{bluegreen2}
\colorlet{colsingtext}{colsing}
\node at (-6,0.85) {$X$};
\draw[-open triangle 45] (-5, -0.75) -- (-5,1);
\node[color=colsing] (O1) at (-5,-0.02) {\Large $\bullet$};
\node[below, xshift=11pt, yshift=-1pt, text=colsingtext] at (O1) {$(d)$};
\node[right, yshift=23pt] at (O1) {$A$};
\draw[<-] (-3.5,0) -- node[above]{$\pi$} (-2.5,0);
\node at (0,0.85) {$\widetilde{X}$};
\draw[very thick] (-1.5,0) -- (3.5,0) node [right, xshift=3pt] {$E$};
\draw[-open triangle 45] (2.5, -0.75) -- (2.5,1);
\node[color=colsing] (O2) at (2.5,-0.02) {\Large $\bullet$};
\node[below, xshift=11pt, yshift=-1pt, text=colsingtext] at (O2) {$(q)$};
\node[right, yshift=23pt] at (O2) {$\widehat{A}$};
\end{tikzpicture}
\caption{Blowing-up at the origin of $X(d;1,q)$ with weights $(1,q)$.}
\label{fig:blowing-up}
\end{figure}
\break
Note that $\widehat{A}$ transversally intersects $E$ at a singular point of the ambient space of type
$(q;1,-d) = (q;1,-r)$ being $d = c \cdot q + r$, see Figure~\ref{fig:blowing-up}. One has
\[
\widehat{A} \cdot E = \frac{1}{q}, \qquad
E^2 = -\frac{d}{q}, \qquad
K_{\pi} = \left( \frac{1+q}{d} - 1 \right) E.
\]
Since $0 \leq k < d$, one gets
\[
\pi^* D = k \widehat{A} + \frac{k}{d} E
\quad \Longrightarrow \quad \floor{\pi^* D} = k \widehat{A},
\quad \fpart{\pi^* D} = \frac{k}{d} E. \\
\]
Therefore
\[
- \frac{1}{2} \fpart{\pi^* D} \cdot ( \floor{\pi^* D} - K_\pi ) =
-\frac{1}{2} \frac{k}{d} E \cdot \left( k \widehat{A} - \frac{1+q-d}{d} E \right) =
- \frac{k(k+1+q-d)}{2dq}.
\]
By Proposition~\ref{prop:correction-term},
\begin{equation}\label{eq:R-frac-R}
R_{(d;1,q)}(k) = - \frac{k(k+1+q-d)}{2dq} + R_{(q;1,-r)}(k).
\end{equation}
Finally Lemma~\ref{lemma:fpart} completes the proof.
\end{proof}

Note that equation~\eqref{eq:R-frac-R} already provides a recursive formula for computing $R_{(d;1,q)}(k)$
without using Lemma~\ref{lemma:fpart}. However, the corresponding algorithm for a quotient space of type $(d;1,d-1)$
have $d-1$ steps. Therefore the method has an exponential complexity in terms of size of the input, i.e.~the number
of the bits used to represent the input. By contrast, the worst case in the recursive algorithm provided by
Theorem~\ref{thm:correction-term} is given by the so-called Fibonacci sequence.


\section{Non-pairwise coprime case}\label{sec:non-coprime}

Assume the weights $w_0, w_1, w_2 \in \ZZ_{\geq 1}$ are not necessarily pairwise coprime
and denote by $w_{ij} = \gcd(w_i,w_j)$ for $i,j = 0,1,2$, $i \neq j$. Assume $\gcd(w_0,w_1,w_2) = 1$.
For a fixed $d \in \ZZ_{\geq 0}$,
let us choose a solution $(a_0,a_1,a_2)$ of
$\cD_{w,d} \cap \ZZ^{3} = \{ (i,j,k) \in \ZZ_{\geq 0}^3 \mid w_0 i + w_1 j + w_2 k = d \}$.
Consider the divisions
\[
\left\{
\begin{aligned}
a_0 &= q_0 w_{12} + r_0, \\
a_1 &= q_1 w_{02} + r_1, \\
a_2 &= q_2 w_{01} + r_2,
\end{aligned}
\right.
\]
where $0 \leq r_k < w_{ij}$ for all $\{i,j,k\} = \{0,1,2\}$. Denote by $v_i = \frac{w_i}{w_{ij}w_{ik}}$
for $\{i,j,k\} = \{0,1,2\}$ and by $e = v_0 q_0 + v_1 q_1 + v_2 q_2$. Then
\begin{equation}\label{eq:w0r0plus}
\begin{aligned}
w_0 r_0 +& w_1 r_1 + w_2 r_2 = w_0 (a_0 - q_0 w_{12}) + w_1 (a_1 - q_1 w_{02}) + w_2 (a_2 - q_2 w_{12}) \\
&= w_0 a_0 + w_1 a_1 + w_2 a_2 - w_0 q_0 w_{12} - w_1 q_1 w_{02} - w_2 q_2 w_{12} \\
&= d - \frac{w_0}{w_{01}w_{02}} q_0 w_{01}w_{02}w_{12} - \frac{w_1}{w_{01}w_{12}} q_1 w_{01}w_{02}w_{12}
- \frac{w_2}{w_{02}w_{12}} q_2 w_{01}w_{02}w_{12} \\
&= d - ( v_0 q_0 + v_1 q_1 + v_2 q_2 ) w_{01}w_{02}w_{12} = d - e w_{01}w_{02}w_{12}.
\end{aligned}
\end{equation}
If $(a'_0, a'_1, a'_2) \in \cD_{w,d} \cap \ZZ^{3}$ is another solution, then
\[
w_0 ( a'_0 - r_0 ) + w_1 ( a'_1 - r_1 ) + w_2( a'_2 - r_2 ) = e w_{01} w_{02} w_{12}.
\]
Since $\gcd(w_0,w_1,w_2) = 1$, the previous equation implies that $w_{ij}$ must divide $a'_k - r_k$,
that is, $a'_k \equiv r_k \mod w_{ij}$, for all $\{i,j,k\} = \{0,1,2\}$.
In particular, $a'_k \geq r_k$.
This shows that triple $(r_0,r_1,r_2)$ does not depend on the chosen $(a,b,c)$ and it is uniquely
determined by the initial data $w_0,w_1,w_2,d$. This motivates the following definition.

\begin{dfn}
Consider $w = (w_0, w_1, w_2) \in \ZZ_{\geq 1}^3$, $d \in \ZZ_{\geq 0}$, $(a_0,a_1,a_2) \in \cD_{w,d} \cap \ZZ^{3}$.
Then one defines
\[
r_{k,w}(d) = ( a_k \mod w_{ij} ) \in [0,w_{ij}),
\]
for any $\{i,j,k\} = \{0,1,2\}$.
\end{dfn}

The following map
\[
\begin{aligned}
\varphi: \cD_{w,d} \longrightarrow \cD_{v,e}, \
(i,j,k) \longmapsto \left( \frac{i-r_{0,w}(d)}{w_{12}}, \frac{j-r_{1,w}(d)}{w_{02}}, \frac{k-r_{2,w}(d)}{w_{01}} \right),
\end{aligned}
\]
where $w = (w_0,w_1,w_2)$ and $v=(v_0,v_1,v_2)$, is well defined and bijective. Therefore
\begin{equation}\label{eq:Lwd}
\Eh_{\w}(d) = \# (\cD_{w,d} \cap \ZZ^3) = \# (\cD_{v,e} \cap \ZZ^3) = \Eh_{v}(e)
\end{equation}
and the new weights $v_i$'s are pairwise coprime.

We summarize the previous discussion in the following result.

\begin{prop}
Let $w = (w_0,w_1,w_2) \in \ZZ_{\geq 1}^3$, $\gcd(w_0,w_1,w_2) = 1$, and $d \in \ZZ_{\geq 0}$.
For $\{i,j,k\}=\{1,2,3\}$ consider
\[
w_{ij} = \gcd(w_i,w_j), \quad
v_i = \frac{w_i}{w_{ij}w_{ik}}, \quad
e = \frac{d - (w_0 r_{0,w}(d) + w_1 r_{1,w}(d) + w_2 r_{2,w}(d))}{w_{01}w_{02}w_{12}}.
\]
Then $\Eh_w(d) = \Eh_v(e)$.

Moreover, if $\varepsilon, s_0, s_1, s_2 \geq 0$, then
$r_{k,w}(\varepsilon w_{01} w_{02} w_{12} + w_0 s_0 + w_1 s_1 + w_2 s_2) = s_k$ as long as
$0 \leq s_k < w_{ij}$ for all $\{i,j,k\} = \{0,1,2\}$. In such a case,
\[
\Eh_w(\varepsilon w_{01} w_{02} w_{12} + w_0 s_0 + w_1 s_1 + w_2 s_2 ) = \Eh_v (\varepsilon).
\]
\end{prop}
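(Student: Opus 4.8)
The first assertion needs no new work: it is exactly \eqref{eq:Lwd}, since the map $\varphi$ preceding that equation is a bijection $\cD_{w,d}\cap\ZZ^3\to\cD_{v,e}\cap\ZZ^3$, and \eqref{eq:w0r0plus} shows that the value $e$ appearing in the statement, namely $e=\bigl(d-(w_0 r_{0,w}(d)+w_1 r_{1,w}(d)+w_2 r_{2,w}(d))\bigr)/(w_{01}w_{02}w_{12})$, is precisely the one coming from that bijection. So the plan is to concentrate entirely on the ``moreover'' part.

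Set $N=\varepsilon w_{01}w_{02}w_{12}+w_0 s_0+w_1 s_1+w_2 s_2$ with $0\le s_k<w_{ij}$ for every $\{i,j,k\}=\{0,1,2\}$. First I would record the congruence that carries all the weight: any $(a_0,a_1,a_2)\in\cD_{w,N}\cap\ZZ^3$ satisfies $a_k\equiv s_k\pmod{w_{ij}}$ for all $\{i,j,k\}=\{0,1,2\}$. Indeed $w_0(a_0-s_0)+w_1(a_1-s_1)+w_2(a_2-s_2)=\varepsilon w_{01}w_{02}w_{12}$; reducing modulo $w_{ij}$ annihilates the right-hand side as well as the two summands with indices $i$ and $j$ (because $w_{ij}$ divides both $w_i$ and $w_j$), leaving $w_k(a_k-s_k)\equiv 0\pmod{w_{ij}}$, and $\gcd(w_k,w_{ij})=\gcd(w_0,w_1,w_2)=1$ forces $a_k\equiv s_k$. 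Since $0\le s_k<w_{ij}$, this gives at once $r_{k,w}(N)=(a_k\bmod w_{ij})=s_k$ whenever $\cD_{w,N}\cap\ZZ^3\ne\emptyset$, which is the second claim.

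For the identity $\Eh_w(N)=\Eh_v(\varepsilon)$ it is cleanest to exhibit an explicit bijection rather than reuse $\varphi$ (which a priori presupposes $\cD_{w,N}\ne\emptyset$). Using $w_i=v_iw_{ij}w_{ik}$, one checks by direct substitution that
\[
\psi\colon\cD_{v,\varepsilon}\cap\ZZ^3\longrightarrow\cD_{w,N}\cap\ZZ^3,\qquad (b_0,b_1,b_2)\longmapsto(w_{12}b_0+s_0,\ w_{02}b_1+s_1,\ w_{01}b_2+s_2),
\]
is well defined, because $w_0(w_{12}b_0)+w_1(w_{02}b_1)+w_2(w_{01}b_2)=(v_0b_0+v_1b_1+v_2b_2)w_{01}w_{02}w_{12}=\varepsilon w_{01}w_{02}w_{12}$, so the image lies in $\ZZ_{\ge 0}^3$ and its coordinates sum (against $w_0,w_1,w_2$) to $N$. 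The map $\psi$ is visibly injective, and it is surjective because, given $(a_0,a_1,a_2)\in\cD_{w,N}\cap\ZZ^3$, the congruence above together with $a_k\ge 0$ and $0\le s_k<w_{ij}$ shows that $b_k:=(a_k-s_k)/w_{ij}$ is a nonnegative integer, and one checks $\sum_k v_kb_k=\varepsilon$ and $\psi(b_0,b_1,b_2)=(a_0,a_1,a_2)$. Counting both sides yields $\Eh_w(N)=\#(\cD_{w,N}\cap\ZZ^3)=\#(\cD_{v,\varepsilon}\cap\ZZ^3)=\Eh_v(\varepsilon)$; alternatively, once $r_{k,w}(N)=s_k$ is known, one may instead substitute $d=N$ into the first part of the statement and observe that then $e=\varepsilon$.

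I do not foresee a genuine obstacle. The only delicate point is the degenerate case $\cD_{w,N}\cap\ZZ^3=\emptyset$ — equivalently, $\varepsilon$ not lying in the numerical semigroup generated by $v_0,v_1,v_2$ — where $r_{k,w}(N)$ is undefined; but then $\psi$ forces $\cD_{v,\varepsilon}\cap\ZZ^3=\emptyset$ as well, so $\Eh_w(N)=0=\Eh_v(\varepsilon)$ still holds and the residue statement is to be read as asserting the value $s_k$ for whatever solution exists. Everything else reduces to the elementary $\gcd$ manipulation above and the mechanical substitution verifying that $\psi$ is well defined and inverse to $(a_0,a_1,a_2)\mapsto\bigl((a_0-s_0)/w_{12},(a_1-s_1)/w_{02},(a_2-s_2)/w_{01}\bigr)$.
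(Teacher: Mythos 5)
Your argument is correct and follows essentially the same route as the paper's own (implicit) proof: the congruence $a_k\equiv s_k\pmod{w_{ij}}$, obtained by reducing $w_0(a_0-s_0)+w_1(a_1-s_1)+w_2(a_2-s_2)=\varepsilon\,w_{01}w_{02}w_{12}$ modulo $w_{ij}$ and using $\gcd(w_0,w_1,w_2)=1$, is exactly the uniqueness argument the paper gives before defining $r_{k,w}$, and your map $\psi$ is precisely the inverse of the paper's bijection $\varphi$ that yields \eqref{eq:Lwd}, after which $e=\varepsilon$ gives the final identity. Your explicit handling of the degenerate case $\cD_{w,N}\cap\ZZ^3=\emptyset$ is a small refinement that the paper leaves implicit.
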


There is a geometric explanation of this phenomenon. Consider the isomorphism of algebraic varieties
defined by
\[
\phi: \PP^2_w \longrightarrow \PP^2_v, \ [x:y:z]_w \longmapsto [x^{w_{12}}:y^{w_{02}}:z^{w_{01}}]_v
\]
Choose an effective divisor $D$ of degree $d$ in $\PP^2_w$. It is given by a quasihomogeneous polynomial
of degree~$d$
\[
F = \sum_{(i,j,k) \in T_{w,d}} a_{ijk} x^i y^j z^k
= x^{r_0} y^{r_1} z^{r_2} \sum_{(i,j,k) \in T_{w,d}} a_{ijk} x^{i-r_0} y^{j-r_1} z^{k-r_2}.
\]
The corresponding divisor $\phi_*(D)$ in $\PP^2_v$ is given by
\begin{equation}\label{eq:polynomial-G}
G = x^{\frac{r_0}{w_{12}}} y^{\frac{r_1}{w_{02}}} z^{\frac{r_2}{w_{01}}}
\sum_{(i,j,k) \in T_{w,d}} a_{ijk} x^{\frac{i-r_0}{w_{12}}} y^{\frac{j-r_1}{w_{02}}} z^{\frac{k-r_2}{w_{01}}},
\end{equation}
that is,
\[
\phi_*D = \frac{r_0}{w_{12}} X + \frac{r_1}{w_{02}} Y + \frac{r_2}{w_{01}} Z + \tilde{D},
\]
where $X = \{x=0\}$, $Y = \{y=0\}$, $Z = \{z=0\}$, and $\tilde{D}$ is the integral
divisor in $\PP^2_v$ defined by the sum given in~\eqref{eq:polynomial-G}.
Using~\eqref{eq:w0r0plus} one sees that the $v$-degree of $\tilde{D}$ is $e$.
Note that $0 \leq \frac{r_k}{w_{ij}} < 1$ and this means that $\floor{\phi_*D} = \tilde{D}$.
Recall that
\[
\cO_{\PP^2_v}(\floor{\phi_*D})
= \{ f \in \cK_{\PP^2_v} \mid (f) + \floor{\phi_*D} \geq 0 \}
= \{ f \in \cK_{\PP^2_v} \mid \floor{(f) + \phi_*D} \geq 0 \}.
\]
The condition $\floor{(f) + \phi_*D} \geq 0$ is equivalent to $(f) + \phi_*D \geq 0$.
Then one has
\[
\cO_{\PP^2_v}(\phi_*D) = \cO_{\PP^2_v}(\floor{\phi_*D}) = \cO_{\PP^2_v}(\tilde{D}),
\]
cf.~\cite{Sak84}. Finally,
\[
\Eh_{\w}(d)
= \chi(\PP^2_w,\cO_{\PP^2_w}(D)) = \chi(\PP^2_v,\cO_{\PP^2_v}(\phi_*D))
= \chi(\PP^2_v,\cO_{\PP^2_v}(\tilde{D})) = \Eh_{v}(e).
\]
as it was claimed in~\eqref{eq:Lwd} above.

\begin{ex}
Let us consider $w = (1235, 6545, 2652)$ and $d = 1710721$. One checks that $(1, 106, 383)$
is a solution in $\cD_{w,d} \cap \ZZ^3$. Note that $w_{01} = 5$, $w_{02} = 13$, $w_{12} = 17$.
Then
\[
\begin{aligned}
r_{0,w}(d) &= (1 \mod 17) = 1, \\
r_{1,w}(d) &= (106 \mod 13) = 2, \\
r_{2,w}(d) &= (383 \mod 5) = 3.
\end{aligned}
\]
The new weights are
\[
v_0 = \frac{w_0}{w_{01}w_{02}} = 19, \quad
v_1 = \frac{w_1}{w_{01}w_{12}} = 77, \quad
v_2 = \frac{w_2}{w_{02}w_{12}} = 12.
\]
Finally,
\[
e = \frac{d - (w_0 r_0 + w_1 r_1 + w_2 r_2)}{w_{01}w_{02}w_{12}} = 1528.
\]
Then $\Eh_w(d) = \Eh_{(19,77,12)}(1528)$ and now the new weights are pairwise coprime.
We will see later in \S\ref{sec:overview} that the number of solutions of $T_{v,e} \cap \ZZ^3$ is precisely $70$.
\end{ex}


\section{Overview with a detailed example}\label{sec:overview}

Consider $w_0=19$, $w_1=77$, $w_2=12$, and $d_w=1528$. One manually checks that the number of nonnegative integers lying
on the plane $19 x + 77y + 12 z = 1528$ is $70$, that is, the cardinality of 
\[
\{ (i,j,k) \in \ZZ_{\geq 0}^3 \mid w_0 i + w_1 j + w_2 k = d_w \} \subset \RR^3
\]
is precisely $70$. Here is the list of all of them:
\begin{scriptsize}
\[
\begin{aligned}
& (0, 8, 76), (1, 9, 68), (2, 10, 60), (3, 11, 52), (4, 0, 121), (4, 12, 44), (5, 1, 113), (5, 13, 36), (6, 2, 105), (6, 14, 28), \\
& (7, 3, 97), (7, 15, 20), (8, 4, 89), (8, 16, 12), (9, 5, 81), (9, 17, 4), (10, 6, 73), (11, 7, 65), (12, 8, 57), (13, 9, 49), \\
& (14, 10, 41), (15, 11, 33), (16, 0, 102), (16, 12, 25), (17, 1, 94), (17, 13, 17), (18, 2, 86), (18, 14, 9), (19, 3, 78), (19, 15, 1), \\
& (20, 4, 70), (21, 5, 62), (22, 6, 54), (23, 7, 46), (24, 8, 38), (25, 9, 30), (26, 10, 22), (27, 11, 14), (28, 0, 83), (28, 12, 6), \\
& (29, 1, 75), (30, 2, 67), (31, 3, 59), (32, 4, 51), (33, 5, 43), (34, 6, 35), (35, 7, 27), (36, 8, 19), (37, 9, 11), (38, 10, 3), \\
& (40, 0, 64), (41, 1, 56), (42, 2, 48), (43, 3, 40), (44, 4, 32), (45, 5, 24), (46, 6, 16), (47, 7, 8), (48, 8, 0), (52, 0, 45), \\
& (53, 1, 37), (54, 2, 29), (55, 3, 21), (56, 4, 13), (57, 5, 5), (64, 0, 26), (65, 1, 18), (66, 2, 10), (67, 3, 2), (76, 0, 7).
\end{aligned}
\]
\end{scriptsize}

\noindent Figure~\ref{fig:example-1528} illustrates the set of solutions represented by little red (interior)
and green (boundary) point and the corresponding plane in $\RR^3$. Note that the drawing has been rescaled 
on the right-hand side for better viewing.

Note that Pick's theorem is far from being true. The number of points on the boundary is $b = 9$ and
the number of interior points is $i=61$. The area of such an imaginary triangle in $\RR^2$ would be
$A = i + \frac{b}{2} - 1 = 64.5$. However, the true area is $A \approx 5333.74$. This is not only because
the vertices are not integral but also because the triangle does not lie in the plane $\RR^2$.

\begin{figure}[th]
\centering
\tikzset{scaled unit vectors/.code={ \path (#1,0,0) coordinate (ex) (0,#1,0) coordinate (ey) (0,0,#1) coordinate (ez);},
    scaled cs/.style={x={(ex)},y={(ey)},z={(ez)}}}
\begin{tikzpicture}[scale=0.9,x={(-1cm,-1cm)},y={(1cm,0cm)}, z={(0cm,1cm)}]
\tikzset{scaled unit vectors=0.035}
\draw[scaled cs, ->, >=latex] (0,0,0) -- (95,0,0) node[above left]{$x$};
\draw[scaled cs, ->, >=latex] (0,0,0) -- (0,35,0) node[right]{$y$};
\draw[scaled cs, ->, >=latex] (0,0,0) -- (0,0,150) node[left]{$z$};
\draw[scaled cs, white, fill=blue, opacity=0.25] (1528/19,0,0) -- (0,1528/77,0) -- (0,0,1528/12) -- cycle;
\draw[scaled cs, blue, ultra thick] (1528/19,0,0) -- (0,1528/77,0) -- (0,0,1528/12) -- cycle;
\foreach \P in {(1, 9, 68), (2, 10, 60), (3, 11, 52), (4, 12, 44), (5, 1, 113), (5, 13, 36), (6, 2, 105), (6, 14, 28),
(7, 3, 97), (7, 15, 20), (8, 4, 89), (8, 16, 12), (9, 5, 81), (9, 17, 4), (10, 6, 73), (11, 7, 65), (12, 8, 57), (13, 9, 49),
(14, 10, 41), (15, 11, 33), (16, 12, 25), (17, 1, 94), (17, 13, 17), (18, 2, 86), (18, 14, 9), (19, 3, 78),
(19, 15, 1), (20, 4, 70), (21, 5, 62), (22, 6, 54), (23, 7, 46), (24, 8, 38), (25, 9, 30), (26, 10, 22), (27, 11, 14),
(28, 12, 6), (29, 1, 75), (30, 2, 67), (31, 3, 59), (32, 4, 51), (33, 5, 43), (34, 6, 35), (35, 7, 27),
(36, 8, 19), (37, 9, 11), (38, 10, 3), (41, 1, 56), (42, 2, 48), (43, 3, 40), (44, 4, 32), (45, 5, 24),
(46, 6, 16), (47, 7, 8), (53, 1, 37), (54, 2, 29), (55, 3, 21), (56, 4, 13), (57, 5, 5),
(65, 1, 18), (66, 2, 10), (67, 3, 2)}
    \draw [scaled cs, draw=black, fill=red] \P circle (2.5pt);
\foreach \P in {(0,8,76), (4,0,121), (16, 0, 102), (28, 0, 83), (40, 0, 64), (48, 8, 0), (52, 0, 45), (64, 0, 26), (76, 0, 7)}
    \draw [scaled cs, draw=black, fill=green] \P circle (2.5pt);
\end{tikzpicture}
\begin{tikzpicture}[scale=0.9,x={(-1cm,-1cm)},y={(8cm,0cm)}, z={(0cm,1cm)}]
\tikzset{scaled unit vectors=0.035}
\draw[scaled cs, ->, >=latex] (0,0,0) -- (95,0,0) node[above left]{$x$};
\draw[scaled cs, ->, >=latex] (0,0,0) -- (0,23,0) node[right]{$y$};
\draw[scaled cs, ->, >=latex] (0,0,0) -- (0,0,150) node[left]{$z$};
\filldraw [scaled cs, black] (1528/19,0,0) circle (3pt);
\draw[scaled cs, white, fill=blue, opacity=0.25] (1528/19,0,0) -- (0,1528/77,0) -- (0,0,1528/12) -- cycle;
\draw[scaled cs, blue, ultra thick] (1528/19,0,0) -- (0,1528/77,0) -- (0,0,1528/12) -- cycle;
\foreach \P in {(1, 9, 68), (2, 10, 60), (3, 11, 52), (4, 12, 44), (5, 1, 113), (5, 13, 36), (6, 2, 105), (6, 14, 28),
(7, 3, 97), (7, 15, 20), (8, 4, 89), (8, 16, 12), (9, 5, 81), (9, 17, 4), (10, 6, 73), (11, 7, 65), (12, 8, 57), (13, 9, 49),
(14, 10, 41), (15, 11, 33), (16, 12, 25), (17, 1, 94), (17, 13, 17), (18, 2, 86), (18, 14, 9), (19, 3, 78),
(19, 15, 1), (20, 4, 70), (21, 5, 62), (22, 6, 54), (23, 7, 46), (24, 8, 38), (25, 9, 30), (26, 10, 22), (27, 11, 14),
(28, 12, 6), (29, 1, 75), (30, 2, 67), (31, 3, 59), (32, 4, 51), (33, 5, 43), (34, 6, 35), (35, 7, 27),
(36, 8, 19), (37, 9, 11), (38, 10, 3), (41, 1, 56), (42, 2, 48), (43, 3, 40), (44, 4, 32), (45, 5, 24),
(46, 6, 16), (47, 7, 8), (53, 1, 37), (54, 2, 29), (55, 3, 21), (56, 4, 13), (57, 5, 5),
(65, 1, 18), (66, 2, 10), (67, 3, 2)}
    \draw [scaled cs, draw=black, fill=red] \P circle (2.5pt);
\foreach \P in {(0,8,76), (4,0,121), (16, 0, 102), (28, 0, 83), (40, 0, 64), (48, 8, 0), (52, 0, 45), (64, 0, 26), (76, 0, 7)}
    \draw [scaled cs, draw=black, fill=green] \P circle (2.5pt);
\filldraw [scaled cs, black] (1528/19,0,0) circle (3pt);
\filldraw [scaled cs, black] (0,1528/77,0) circle (3pt);
\filldraw [scaled cs, black] (0,0,1528/12) circle (3pt);
\draw[scaled cs] (1528/19,0,0) node[below right]{$80.42$};
\draw[scaled cs] (0,1528/77,0) node[below=5]{$19.84$};
\draw[scaled cs] (0,0,1528/12) node[above right]{$127.33$};
\end{tikzpicture}
\caption{Solution for $19i + 77j + 12k = 1528$. (True and rescaled picture)}
\label{fig:example-1528}
\end{figure}

\subsection{Intersection theory and Riemann-Roch formula}

According to the theory each solution $(i,j,k) \in \ZZ^3$ gives rise to a monomial $x^i y^j z^k$ of weighted degree $d_w$.
This way one finds a basis as a $\CC$-vector space of $\CC[x,y,z]_{w,d}$, the quasihomogeneous polynomials with respect
to $w$ of degree $d$. It turns out that this vector space is isomorphic to the cohomology $H^0(\PP^2_w, \cO_{\PP^2_w}(D))$
where $D$ is any divisor in the weighted projective plane of degree $d_w$ and
\[
\cO_{\PP^2_w}(D) = \{ f \in \cK_{\PP^2_w} \mid (f) + D \geq 0 \}
\]
being $\cK_{\PP^2_w}$ the sheaf of rational functions on $\PP^2_w$. If $D = \{ H = 0 \} \geq 0$ is an effective divisor
of degree $d_w$, then the isomorphism is given by
\[
\begin{tikzcd}[row sep=-9pt,/tikz/column 1/.append style={anchor=base east},/tikz/column 2/.append style={anchor=base west}]
{\CC[x,y,z]_{w,d}}\rar&H^0(\PP^2_w, \cO_{\PP^2_w}(D))\\
F\rar[mapsto]&\dfrac{F}{H}.
\end{tikzcd}
\]
In this situation the Euler characteristic $\chi(\PP^2_w, \cO_{\PP^2_w}(D))$
is concentrated in degree zero and by the Riemann-Roch formula one has
\[
\chi(\PP^2_w, \cO_{\PP^2_w}(D)) = 1 + \frac{1}{2} D \cdot (D - K_{\PP^2_w}) + R_{\PP^2_w}(D),
\]
where $K_{\PP^2_w}$ is the canonical divisor of $\PP^2_w$ and $R_{\PP^2_w}(D)$ is the so-called correction term.

Going back to our example, Bézout's theorem tells us that the first term in the previous formula for the
Euler characteristic can easily be computed as
\[
\frac{1}{2} D \cdot (D - K_{\PP^2_w}) = \frac{d_w (d_w+|w|)}{2\bar{w}}
= \frac{1528 \cdot (1528 + 19 + 77 + 12)}{2 \cdot 19 \cdot 77 \cdot 12}
= \frac{312476}{4389} \approx 71.195.
\]
In this case the addend $1+\frac{d (d+|w|)}{2\bar{w}} = \frac{316865}{4389} \approx 72.195$ already provides
a good approximation of the problem. However, in general if the numbers $w_0,w_1,w_2,d$ are big enough, then this
quadratic term may not suffice and the correction term really matters.

\subsection{The correction term \texorpdfstring{$R_{\PP^2_w}(D)$}{Rpw2D}}\label{sec:correction-term}

Since $\PP^2_w$ has three singular points at most, for weighted projective planes this term has simply the shape
\[
R_{\PP^2_w}(D) = R_{X(p;q,r)}(D) + R_{X(q;p,r)}(D) + R_{X(r;p,q)}(D).
\]
In our example one gets
\[
\begin{aligned}
R_{\PP^2_w}(D) &= R_{X(19; 77,12)}(1528) + R_{X(77; 19,12)}(1528) + R_{X(12; 19,77)}(1528) \\
&=  - \frac{7}{19} - \frac{38}{77} - \frac{4}{3}
= - \frac{9635}{4389}
\approx - 2.195.
\end{aligned}
\]
Therefore the number of solution of our initial problem is precisely
\[
1 + \frac{1}{2} D \cdot (D - K_{\PP^2_w}) + R_{\PP^2_w}(D) = 1 + \frac{312476}{4389} - \frac{9635}{4389} = 70,
\]
as it was computed by hand at the beginning.

We discuss the calculation of one of the summands in $R_{\PP^2_w}(D)$, namely $R_{X(19; 77,12)}(1528)$.
Recall the main properties of the local correction term for cyclic quotient singularities that allow for the calculation:
\begin{enumerate}[label=\rm(R\arabic{enumi})]
\item\label{R1} $R_{X(d;a,b)}(k) = R_{X(d;a,b)}(k \mod d)$, \vspace{2.5pt}
\item\label{R2} $R_{X(d;a,b)}(k) = R_{X(d;1,a^{-1}b \! \mod d)}(a^{-1} k)$,
\item\label{R3} $R_{X(d;1,q)}(k) = -R_{X(q;1,d \! \mod q)}(k \mod q) - \fpart{\frac{k}{q}} - \frac{k(k+1+q-d)}{2dq}$,
\end{enumerate}
where $\fpart{\frac{k}{q}}$ denotes the fractional part of the quotient. First, one applies \ref{R1} and \ref{R2} to pass from a general
cyclic quotient space to one of the form $X(d;1,q)$ so that one can use~\ref{R3},
\[
R_{X(19; 77,12)}(1528) \stackrel{(1)}{=} R_{X(19; 77,12)}(8) \stackrel{(2)}{=} R_{X(19; 1,12)}(8).
\]
Since the Euclidean algorithm for $(19,12)$ involves five divisions, the number of operations for computing
$R_{X(19;1,12)}(k)$ will be the same as those for computing $R_{X(13;1,8)}(k)$ corresponding to the Fibonacci numbers
$F_7 = 13$ and $F_6 = 8$, which is the worst case of the algorithm from complexity point of view.
By~\ref{R3}
\[
\begin{aligned}
R_{X(19;1,12)}(8) &= - R_{X(12;1,7)}(8) - \fpart{\frac{8}{12}} - \frac{8 \cdot (8+1+12-19)}{2 \cdot 19 \cdot 12}
= - R_{X(12;1,7)}(8) - \frac{40}{57}, \\[5pt]
R_{X(12;1,7)}(8) &= - R_{X(7;1,5)}(1) - \fpart{\frac{8}{7}} - \frac{8 \cdot (8+1+7-12)}{2 \cdot 12 \cdot 7}
= - R_{X(7;1,5)}(1) - \frac{1}{3}, \\[5pt]
R_{X(7;1,5)}(1) &= - R_{X(5;1,2)}(1) - \fpart{\frac{1}{5}} - \frac{1 \cdot (1+1+5-7)}{2 \cdot 7 \cdot 5}
= - R_{X(5;1,2)}(1) - \frac{1}{5}, \\[5pt]
R_{X(5;1,2)}(1) &= - R_{X(2;1,1)}(1) - \fpart{\frac{1}{2}} - \frac{1 \cdot (1+1+2-5)}{2 \cdot 5 \cdot 2}
= - R_{X(2;1,1)}(1) - \frac{9}{20}, \\[5pt]
R_{X(2;1,1)}(1) &= - \cancel{R_{X(1;1,0)}(0)} - \fpart{\frac{1}{1}} - \frac{1 \cdot (1+1+1-2)}{2 \cdot 2 \cdot 1} = -\frac{1}{4}.
\end{aligned}
\]
The procedure finishes here since in general $R_{X(1;a,b)}(k)$ is always zero. Summarizing 
\[
R_{X(19;1,12)}(8) = - \frac{40}{57} + \frac{1}{3} - \frac{1}{5} + \frac{9}{20} - \frac{1}{4} = - \frac{7}{19}. 
\]
The sum of the last three fractions corresponds to $R_{X(7;1,5)}(1)$ and it is already zero.
One could have stopped the method at that step. This is a consequence of a general property of
the correction term, namely $R_{X(r;s,t)}(r-s-t) = 0$.

\subsection{The \texorpdfstring{$\Delta_X$}{DX}-invariant}

There is another way to interpret the correction term for cyclic quotient singularities by using
the $\Delta_X$-invariant as follows
\[
R_{X(d;a,b)}(k) = - \Delta_{X(d;a,b)}(-k),
\]
where by definition
\[
\Delta_{X(p;-1,q)}(k) = A^{(p,q)}_r - \delta^{(p,q)}_r
\quad
(r = q^{-1}k \mod p)
\]
and
\[
A^{(p,q)}_r = \# \{ (i,j) \in \ZZ^2_{\geq 1} \mid pi+qj \leq qr \},
\qquad
\delta^{(p,q)}_r = \frac{r(qr-p-q+1)}{2p}.
\]
Recall that $A^{(p,q)}_r$ is the number of integral points in a triangle while $\delta^{(p,q)}_r$ is the expected
number of integral points if the triangle had integral vertices.

\begin{figure}[ht]
\centering
\begin{tikzpicture}[xscale=1, yscale=1]
\begin{axis}[domain=-1:5, xmin=-1, xmax=5.1, ymin=-2, ymax=13.0, xlabel={}, ylabel={},
axis x line=center, axis y line=center, xtick={1,2,3,4}, ytick={1,3,5,7,9,12}, samples=100]
\draw[white, fill=blue, opacity=0.25] (1,1) -- (1,65/7) -- (77/19,1) -- cycle;
\draw[blue, ultra thick] (1,1) -- (1,65/7) -- (77/19,1) -- cycle;
\draw (4.4,-1) node {$\frac{84}{19}$};
\draw (3.5,7) node {$19x+7y=84$};
\draw[fill=black] (1,65/7) node [above right=-1] {$(1,\frac{65}{7})$} circle (3pt);
\draw[fill=black] (77/19,1) node [above right=-3] {$(\frac{77}{19},1)$} circle (3pt);
\addplot[blue] (x, -19/7*x + 12);
\draw [draw=black, fill=green] (1,1) circle (2.5pt);
\draw [draw=black, fill=green] (1,2) circle (2.5pt);
\draw [draw=black, fill=green] (1,3) circle (2.5pt);
\draw [draw=black, fill=green] (1,4) circle (2.5pt);
\draw [draw=black, fill=green] (1,5) circle (2.5pt);
\draw [draw=black, fill=green] (1,6) circle (2.5pt);
\draw [draw=black, fill=green] (1,7) circle (2.5pt);
\draw [draw=black, fill=green] (1,8) circle (2.5pt);
\draw [draw=black, fill=green] (1,9) circle (2.5pt);
\draw [draw=black, fill=green] (2,1) circle (2.5pt);
\draw [draw=black, fill=red] (2,2) circle (2.5pt);
\draw [draw=black, fill=red] (2,3) circle (2.5pt);
\draw [draw=black, fill=red] (2,4) circle (2.5pt);
\draw [draw=black, fill=red] (2,5) circle (2.5pt);
\draw [draw=black, fill=red] (2,6) circle (2.5pt);
\draw [draw=black, fill=green] (3,1) circle (2.5pt);
\draw [draw=black, fill=red] (3,2) circle (2.5pt);
\draw [draw=black, fill=red] (3,3) circle (2.5pt);
\draw [draw=black, fill=green] (4,1) circle (2.5pt);
\end{axis}
\end{tikzpicture}
\caption{Points associated with $A^{(19,7)}_{12}$, i.e.~$19i+7j \leq 84$, $i,j \geq 1$.}
\label{fig:A19-7-12}
\end{figure}
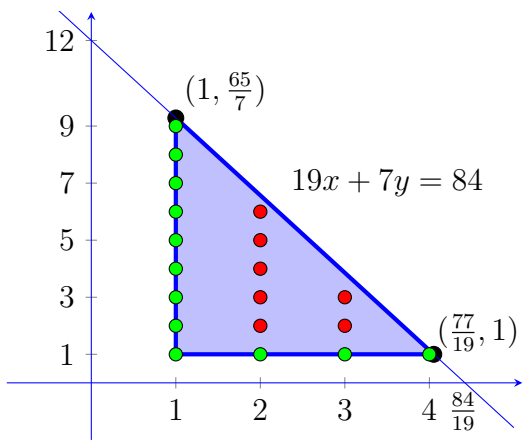

In our example $p=19$, $q=7$, $k=8$, $r = (q^{-1}k \mod p) = 12$ and one obtains
\[
R_{X(19;1,12)}(8) = -\Delta_{X(19;-1,7)}(8) = - A^{(19,7)}_{12} + \delta^{(19,7)}_{12} = -19 + \frac{354}{19} = -\frac{7}{19},
\]
as it was claimed above. Figure \ref{fig:A19-7-12} shows the triangle associated with $A^{(19,7)}_{12}$
and the $19$ integral points, $7$ interior and $12$ on the boundary.
According to Pick's theorem, if the triangle had had integral vertices, then its area would have been
$A = i + \frac{b}{2} - 1 = 12$. However, the true area is $A \approx 12.65$.

\subsection{Minimal generic curves on quotient singularities}\label{sec:generic-curves}

For cyclic quotient singularities the $\Delta_X$-invariant has an intrinsic geometric meaning.
After choosing $f \in \cO_X(k)$ reduced, the rational number $\Delta_X(k)$ can be described by the difference
\begin{equation}\label{eq:DeltaXk}
\Delta_X (k) = \delta_X^{\text{top}}(f) - \kappa_X(f)
\end{equation}
where $\delta_X^{\text{top}}$ is the topological delta invariant and $\kappa_X$
is the analytic kappa invariant of the singularity.
Note that the choice of a reduced $f \in \cO_X(k)$ does not affect the result of $\Delta_X(k)$. Roughly speaking $\delta_X^{\text{top}}(f)$
is a combinatorial formula once an embedded $\QQ$-resolution for $f=0$ is known while $\kappa_X(f)$ is related to counting points in a lattice.
Since the latter is a hard problem, in order to compute the $\Delta_X$-invariant, the idea is to choose $f \in \cO_X(k)$ so that
$\kappa_X(f)$ becomes very easy to calculate and then compute $\delta_X^{\text{top}}(f)$ for such a particular~$f$.
This motives the notion of minimal generic curves in $X(d;a,b)$, see \cite{CMO16}. This notion was generalized for
other type of singularities in~\cite{CLMN22b}, see also~\cite{CLMN22b}.

In our example $X = X(19;1,12)$ and $k = (-8 \mod 19) = 11$. In order to get the numerical data associated with the Hirzebruch-Jung
continued fraction of $\frac{19}{12}$, we apply the Euclidean algorithm (by excess)
\[
\begin{aligned}
19 &= 2 \cdot 12 - 5, \\
12 &= 3 \cdot 5 - 3, \\
5 &= 2 \cdot 3 - 1, \\
3 &= 3 \cdot 1 - 0.
\end{aligned}
\]
This gives $\mathbf{q} = [12,5,3,1]$, $\bar{\mathbf{q}} = [1,2,5,8]$, $\mathbf{c} = [2,3,2,3]$, $[k] = [0,2,0,1]$, $n=4$.
One chooses as minimal generic curve in $\cO_{X(19;1,12)}(11)$ the following
\[
f^{\text{gen}} = (x^{q_2} + y^{\bar{q}_2}) (x^{q_2} - y^{\bar{q}_2}) (x^{q_4} + y^{\bar{q}_4})
= (x^5 - y^2) (x^5 - 2 y^2) (x - y^8).
\]
The kappa invariant is the number of branches of $f^{\text{gen}}$ minus 1, that is, $\sum_{i=1}^4 k_i - 1 = 2$.
Note that the number of branches of $f^{\text{gen}}$ in $\cO_{\CC^2}$ is also $r=3$. Then one can use the formula
\[
\delta_{X(19;1,12)}^{\text{top}} (f^{\text{gen}}) = \frac{\delta^{\text{top}}_{\CC^2}(f^{\text{gen}})}{d} + r \frac{d-1}{2d}
= \frac{18}{19} + 3 \frac{18}{2\cdot 19} = \frac{45}{19}
\]
and finally according to~\eqref{eq:DeltaXk}
\[
R_{X(19;1,12)}(8) = -\Delta_{X(19;1,12)}(11)
= - \frac{45}{19} + 2 = - \frac{7}{19}.
\]
For computing $\delta^{\text{top}}_{\CC^2}(f^{\text{gen}})=18$ above one uses the standard techniques from plane curves,
such as resolution of singularities or the property $\delta^{\text{top}}_{\CC^2}(fg) = \delta^{\text{top}}_{\CC^2}(f) + \delta(g)
+ i(f,g)$, where $i(f,g)$ is the local intersection multiplicity, cf.\S\ref{sec:resolution-sing} below.

Unfortunately this method for computing the $\Delta_X$-invariant may be very inefficient, in comparison to the one described
in \S\ref{sec:correction-term}, if the length of the Euclidean algorithm by excess is very long. This happens for instance
when the quotient space is of the form $X(d;1,d-1)$.

\begin{remark}
Nowadays it is known that the analytic kappa invariant coincides with the $\delta$-invariant of a given curve
$\cC = \{ f = 0 \} \subset X = X(d;1,q)$. Therefore
\[
\delta(\cC) = \kappa_X(f) = \delta_X^{\text{top}}(f) - \Delta_X (k)
= \delta_X^{\text{top}}(f) - \delta_X^{\text{top}}(f^{\text{gen}}) + \kappa_X(f^{\text{gen}}),
\]
where $f \in \cO_X(k)$ is reduced and $f^{\text{gen}}$ is a minimal generic curve in $\cO_X(k)$.
This way the computation of such an important invariant of a curve relies on the calculation of
the minimal generic curves.
\end{remark}

\subsection{Resolution of singularities}\label{sec:resolution-sing}

Consider $\pi: (\tilde{X},E) \to (X,x)$ a resolution of $(X,D)$ at $x$,
where $E$ is the exceptional part of the resolution, $\pi^* D = \hat{D} + E_D$,
$\hat{D}$ is the strict transform of $D$, and $E_D$ is its exceptional part.
Denote by $K_{\tilde{X}}$, $K_X$, and $K_\pi$ the canonical divisor of $\tilde{X}$,
the canonical divisor of $X$, and the relative canonical divisor of $\pi$, respectively.
One has the relation $K_{\tilde{X}} = \pi^* K_X + K_\pi$.
Then
\[
\delta^{\text{top}}_X(D) := \frac{1}{2} E_D \cdot (\hat{D}+K_{\tilde{X}})
= \frac{1}{2} \hat{D} \cdot (E_D-K_\pi)
= - \frac{1}{2} E_D \cdot (E_D-K_\pi).
\]
In our example, if $D = \{ f^{\text{gen}} = 0 \} \subset X(19;1,12)$, then
the Hirzebruch-Jung resolution consists of $4$ exceptional divisors forming
a bamboo-shaped graph, see Figure~\ref{fig:resolution-19-1-12}.

\begin{figure}[ht]
\centering
\begin{tikzpicture}[scale=1]
\draw[thick] (0.5,0) -- node[above=10, left]{$E_1$} (0.5,2.5);
\draw[-open triangle 45] (1.16,0) -- node[left]{} (1.16,1.25);
\draw[-open triangle 45] (1.83,0) -- node[above=25, left=-1]{$D$} (1.83,1.25);
\draw[thick] (0,0.5) -- node[right=42]{$E_2$} (3,0.5);
\draw[thick] (2.5,0) -- node[above=3, right]{$E_3$} (2.5,3);
\draw[thick] (2,2.5) -- node[above]{$E_4$} (5,2.5);
\draw[-open triangle 45] (4.5,3) -- node[below right]{$D$} (4.5,1.75);
\node[below right] at (1,3) {$\tilde{X}$};
\end{tikzpicture}
\caption{Resolution of $X=X(19;1,12)$ and $f^{\text{gen}} \in \cO_X(11)$.}
\label{fig:resolution-19-1-12}
\end{figure}
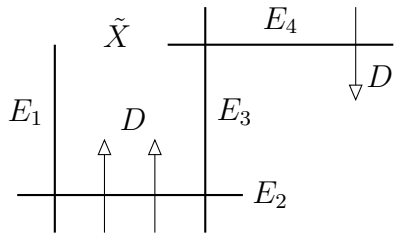

Denote by $\gamma_1 = x^{12} + y$,
$\gamma_2 = x^5 + y^2$, $\gamma_3 = x^3 + y^5$, $\gamma_4 = x + y^8$ the four
curvettes. Then the exceptional part is
\[
E_D = \sum_{i=1}^4 (D \cdot \gamma_i) E_i
= \frac{11}{19} E_1 + \frac{22}{19} E_2 + \frac{17}{19} E_3 + \frac{12}{19} E_4
\]
and the relative canonical divisor is
\begin{equation}\label{eq:Kpi-quotient}
K_\pi = \sum_{i=1}^n \left( \frac{q_i + \bar{q}_i}{d} - 1 \right) E_i
= - \frac{6}{19} E_1 - \frac{12}{19} E_2 - \frac{11}{19} E_3 - \frac{10}{19} E_4.
\end{equation}

The self-intersection number of $E_i$ are given in the vector $\mathbf{c} = [2,3,2,3]$, \S\ref{sec:generic-curves}.
So the intersection matrix of the resolution is 
\[
M =
\begin{pmatrix}
-2 & 1 & 0 & 0 \\
1 & -3 & 1 & 0 \\
0 & 1 & -2 & 1 \\
0 & 0 & 1 & -3
\end{pmatrix}
\]
and hence
\[
\delta_{X(19;1,12)}^{\text{top}}(f^{\text{gen}})
= - \frac{1}{2} \begin{pmatrix} \frac{11}{19} & \frac{22}{19} & \frac{17}{19} & \frac{12}{19} \end{pmatrix} M
\begin{pmatrix} \frac{17}{19} \\[7pt] \frac{34}{19} \\[7pt] \frac{28}{19} \\[7pt] \frac{22}{19} \end{pmatrix}
= \frac{45}{19}.
\]

\begin{remark}
In this particular example a resolution for the minimal generic curve can also be computed simply in
one step and this provides another way to calculate its topological delta invariant.
Yet another way is to use $\delta_X^{\text{top}}(fg) = \delta_X^{\text{top}}(f) + \delta_X^{\text{top}}(g) + i_X(f,g)$.
The details are left to the reader.
\end{remark}

\subsection{The decompositions of \texorpdfstring{$\cO_X(k)$}{OXk} and the null-submodule}

Let $X(d;1,q)$ and assume $f \in \cO_X(k)$. The analytic kappa invariant is defined as
\[
\kappa_X(f) = \dim_\CC \frac{\cO_X(k-1-q)}{M_{f,\pi}^{\text{nul}}},
\]
where $M^{\text{nul}}_{f,\pi}$ is the null-submodule
\[
M^{\text{nul}}_{f,\pi} = \{ h \in \cO_{X}(k-1-q) \mid \pi^{*} ((h) - (f)) + K_\pi \geq 0 \}.
\]
The kappa invariant does not depend on the chosen resolution.

In our example $f^{\text{gen}} \in \cO_{X(19;1,12)}(11)$ and it can be resolved using a weighted blow-up
with weights $(2,5)$, cf.~\S\ref{sec:resolution-sing}, see Figure~\ref{fig:19-1-12_one-step}.

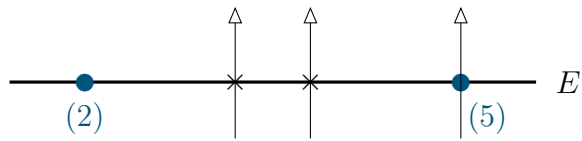
\begin{figure}[ht]
\centering
\begin{tikzpicture}[scale=1.0]
\definecolor{bluegreen2}{RGB}{0, 85, 127}
\colorlet{col1}{blue!40}
\colorlet{col2}{red!40}
\colorlet{col3}{violet!40}
\colorlet{colsing}{bluegreen2}
\colorlet{colsingtext}{colsing}
\draw[very thick] (-3.5,0)--(3.5,0) node [right, xshift=3pt] {$E$};
\node[color=colsing] (O1) at (-2.5,-0.02) {\Large $\bullet$};
\node[below, yshift=-3pt, text=colsingtext] at (O1) {$(2)$};
\node (X1) at (0.5,0) {$\times$};
\draw[-open triangle 45] (0.5, -0.75) -- (0.5,1);
\node[color=colsing] (O2) at (2.5,-0.02) {\Large $\bullet$};
\draw[-open triangle 45] (2.5, -0.75) -- (2.5,1);
\node (X2) at (-0.5,0) {$\times$};
\draw[-open triangle 45] (-0.5, -0.75) -- (-0.5,1);
\node[below, xshift=10pt, yshift=-3pt, text=colsingtext] at (O2) {$(5)$};
\end{tikzpicture}
\caption{Resolution of $X=X(19;1,12)$ and $f^{\text{gen}} \in \cO_X(11)$ in one step.}
\label{fig:19-1-12_one-step}
\end{figure}

The exceptional divisor has multiplicity $\frac{22}{19}$ and the relative canonical
divisor has multiplicity $-\frac{12}{19}$. Then the condition for $h \in \cO_X(17)$
to be in $M^{\text{nul}}_{f,\pi}$ is
\[
\mult_{E} ( \pi^* h ) \geq \frac{34}{19}.
\]
Using the charts of $\pi$, one writes the multiplicity of the exceptional divisor in $\pi^* h$ as
$\frac{1}{19} \ord h(x^2,x^5,y)$. Then
\[
M^{\text{nul}}_{f^{\text{gen}},\pi} = \left\{ \sum_{i,j} a_{ij} x^i y^j \ \Big| \ i+12j \equiv 17 \!\!\! \mod 19, \ 2i+5j \geq 34 \right\}.
\]
Finally,
\[
\frac{\cO_X(k-1-q)}{M_{f,\pi}^{\text{nul}}} \simeq \CC \langle x^5 y, y^3 \rangle
\]
and the kappa invariant is $2$ that coincides with the number of local branches of $f^{\text{gen}}$ minus one,
as it was claimed in \S\ref{sec:generic-curves}. In general the null-submodule is not monomial if the curve
is not minimal generic. According to the theory $\cO_X(17) = \cO_X(12) \otimes  \cO_X(5)$ and
$M^{\text{nul}}_{f,\pi} = \cO_X(5)^{\otimes 3} \otimes \cO_X(1)^{\otimes 2}$ and the quotient has dimension $2$
as $\CC$-vector space.

\subsection{On Blache's 1st question}

Let $X = X(d;1,q)$ with $\gcd(d,q)=1$ and denote by $K_X$ its canonical divisor.
Blache observed \cite[\S8.5]{Bla95} that the behavior of $R_X(\ell K_X)$ had to do with a parabola.
More precisely, denote by $I = \min \{ m \in \NN \mid m K_X \text{ is Cartier} \}$;
it can arithmetically be expressed as $I = \frac{d}{\gcd(d,q+1)}$.
Let $f_{I}: \RR \to \RR$ be the polynomial of degree two
such that $f_{I}(1) = f_{I}(I) = 0$ and $f'_{I}(1) = 1$. Hence
$f_I(x) = \frac{(x-1)(I-x)}{I-1}$. He asked whether $|R_X(\ell K_x)| < f_{I}(\ell)$
for $\ell = 2, \ldots, I-1$. We showed that
\[
| R_{X}(\ell K_X) | \leq \frac{(\ell-1)(I-\ell)}{I}
\]
which answers the question in the positive.
In Figure~\ref{fig:Blache-1}, it is shown the shape of $R_X(x K_X)$ (in red) for
different quotient spaces in comparison to the parabola $\frac{(x-1)(I-x)}{I}$ (in blue).

\begin{figure}[ht]
\centering
\begin{tikzpicture}[xscale=0.65, yscale=0.60]
\begin{axis}[domain=1:19, xmin=0, xmax=21.0, ymin=0, ymax=5.0, xlabel={}, ylabel={},
axis x line=center, axis y line=center, xtick={1,5,10,15,19}, ytick={1,2,3,4}, samples=100]
\addplot[blue, ultra thick] (x, -1/19*x^2 + 20/19*x - 1); 
\addplot coordinates {(1, 0) (2, 3/19) (3, 9/19) (4, 1/19) (5, 11/19) (6, 7/19) (7, 6/19) (8, 8/19) (9, 13/19)
(10, 2/19) (11, 13/19) (12, 8/19) (13, 6/19) (14, 7/19) (15, 11/19) (16, 1/19) (17, 9/19) (18, 3/19) (19, 0)};
\addplot[green] (x, -9/608*x^2 + 45/152*x - 9/32);
\draw (10,4.75) node {$X(19;1,12)$};
\draw [draw=black, fill=white] (14,7/19) circle (4.0pt);
\end{axis}
\end{tikzpicture} \quad
\begin{tikzpicture}[xscale=0.65, yscale=0.60]
\begin{axis}[domain=1:19, xmin=0, xmax=21.0, ymin=0, ymax=5.0, xlabel={}, ylabel={},
axis x line=center, axis y line=center, xtick={1,5,10,15,19}, ytick={1,2,3,4}, samples=100]
\addplot[blue, ultra thick] (x, -1/19*x^2 + 20/19*x - 1); 
\addplot coordinates {(1, 0) (2, 15/19) (3, 26/19) (4, 33/19) (5, 36/19) (6, 35/19) (7, 30/19) (8, 21/19) (9, 8/19)
(10, 9/19) (11, 8/19) (12, 21/19) (13, 30/19) (14, 35/19) (15, 36/19) (16, 33/19) (17, 26/19) (18, 15/19) (19, 0)}; 
\draw (10,4.75) node {$X(19;1,1)$};
\end{axis}
\end{tikzpicture} \quad
\begin{tikzpicture}[xscale=0.65, yscale=0.60]
\begin{axis}[domain=1:19, xmin=0, xmax=21.0, ymin=0, ymax=5.0, xlabel={}, ylabel={},
axis x line=center, axis y line=center, xtick={1,5,10,15,19}, ytick={1,2,3,4}, samples=100]
\addplot[blue, ultra thick] (x, -1/10*x^2 + 11/10*x - 1); 
\addplot coordinates {(1, 0) (2, 4/5) (3, 7/5) (4, 9/5) (5, 2) (6, 2) (7, 9/5) (8, 7/5) (9, 4/5) (10, 0)};
\draw (10,4.75) node {$X(20;1,1)$};
\end{axis}
\end{tikzpicture}
\caption{Comparison between $R_X(xK_X)$ and $\frac{(x-1)(I-x)}{I}$.}
\label{fig:Blache-1}
\end{figure}
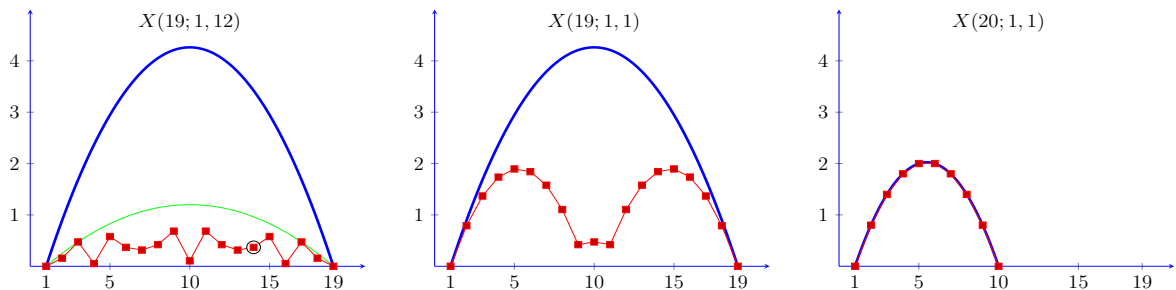

In the previous sections we have seen that $R_{X(19;1,12)}(8) = - \frac{7}{19}$.
Note that $k = 8$ corresponds to $\ell = (8 \cdot 13^{-1} \mod 19) = 14$ that has been marked
in the drawing with a circle. Although the bound for $R_X(\ell K_X)$ does not seem to be very
accurate in general, it is sharp for spaces of the form $X(d;1,1)$ when $d$ is even.
It remains open to find a parabola that better bounds the correction term.
For instance, the best parabola for $X(19;1,12)$ is $f(x) = \frac{9(x-1)(19-x)}{608}$ (in green)
that is characterized by $f(1)=0$, $f(I)=0$, $f'(1) = \frac{81}{304} \approx 0.27$
whose slope at $x = 1$ is much smaller than the one suggested by Blache.

\subsection{On Blache's 2nd question}

In order to understand the behavior of $R_X(\ell K_X)$ he also asked whether $|R_X((\ell+1)K_X) - R_X(\ell K_X)| < 1$
for all $\ell \in \ZZ$. We noticed that this question was related to the log-canonical threshold of $X$ with respect
to the maximal ideal. In particular, we showed that
\begin{equation}\label{eq:RminusR}
|R_X((\ell+1)K_X) - R_X(\ell K_X)| \leq 1 - \lct(X,\mathfrak{m})
\end{equation}
which answers the question in the positive for quotient singularities since $\lct(X,\mathfrak{m}) = \min\{ \frac{q_i+\bar{q}_i}{d}
\mid i = 1,\ldots,n \} \geq \frac{2}{d}$, cf.~\eqref{eq:Kpi-quotient}.

In our example $X(19;1,12)$ the list of differences $|R_X((\ell+1)K_X) - R_X(\ell K_X)|$ for $\ell = 1,\ldots,d-1$ are
\[
\frac{3}{19}, \frac{6}{19}, \frac{10}{19}, \mathbf{\frac{12}{19}}, \frac{4}{19}, \frac{1}{19}, \frac{2}{19}, \frac{5}{19}, \frac{11}{19},
\frac{11}{19}, \frac{5}{19}, \frac{2}{19}, \frac{1}{19}, \frac{4}{19}, \mathbf{\frac{12}{19}}, \frac{10}{19}, \frac{6}{19}, \frac{3}{19}.
\]
Note that the log-canonical threshold is $\lct(X,\mathfrak{m}) = \frac{q_2+\bar{q}_2}{d} = \frac{7}{19}$ and then the bound
provided in~\eqref{eq:RminusR} is sharp and it is reached by $\ell = 4$ and $\ell = 15$. This integers are the solution
of $\pm \ell (1+q) = q_2 \mod d$.


\begin{appendices}
\section{Proof of a simple formula}\label{sec:formulaq}

Let $\zeta_q$ be a $q$th root of unity. During the paper we have used several times that
\[
\sum_{i=1}^{q-1} \frac{1}{1-\zeta_q^i} = \frac{q-1}{2}.
\]
Here we provides a simple proof of this fact.
Note that $t^q - 1$ can be decomposed into irreducible factors as follows
\[
t^q - 1 = \prod_{i=0}^{q-1} (t-\zeta^i).
\]
The derivative with respect to $t$ gives
\[
qt^{q-1} = \sum_{i=0}^{q-1} \frac{t^q-1}{t-\zeta^i} \quad \Longrightarrow \quad
\frac{qt^{q-1}}{t^q-1} = \sum_{i=0}^{q-1} \frac{1}{t-\zeta^i}.
\]
Therefore
\[
\sum_{i=1}^{q-1} \frac{1}{1-\zeta_q^i} =
\lim_{t \to 1} \sum_{i=1}^{q-1} \frac{1}{t-\zeta_q^i} =
\lim_{t \to 1} \left( \frac{qt^{q-1}}{t^q-1} - \frac{1}{t-1} \right) =
\frac{q-1}{2}.
\]
The last limit is a simple calculus exercise.
\end{appendices}


\end{document}